\newtheorem{theorem}{Theorem}[section]
\newtheorem{lemma}[theorem]{Lemma}
\newtheorem{proposition}[theorem]{Proposition}
\newtheorem{remark}[theorem]{Remark}
\numberwithin{equation}{section}
\def\R{{\mathbb R}}
\def\E{{{\mathbb E}\,}}
\def\P{{\mathbb P}}
\def\sgn{{\mathop {{\rm sgn\, }}}}
\def\square{{\vcenter{\vbox{\hrule height.3pt
        \hbox{\vrule width.3pt height5pt \kern5pt
           \vrule width.3pt}
        \hrule height.3pt}}}}
\def\tlint{{- \kern-0.85em \int \kern-0.2em}}
\def\dlint{{- \kern-1.05em \int \kern-0.4em}}
\def \eref#1{\hbox{(\ref{#1})}}
\def \eref#1{\hbox{(\ref{#1})}}
\newenvironment{proof}[1][Proof]{\noindent\textit{#1.} }{\hfill \rule{0.5em}{0.5em}}
\begin{document}

\title{Limit theorems for functionals of long memory linear processes with infinite variance}
\date{\today}
\author{Hui Liu, Yudan Xiong and Fangjun Xu\thanks{This work is partially supported by the National Natural Science Foundation of China (Grant No. 12371156 and Grant No. 11871219).} \\
}
\maketitle
\begin{abstract} Let $X=\{X_n: n\in\mathbb{N}\}$ be a long memory linear process in which the coefficients are regularly varying and innovations are independent and identically distributed and belong to the domain of attraction of an $\alpha$-stable law with $\alpha\in (0, 2)$. Then, for any integrable and square integrable function $K$ on $\mathbb{R}$, under certain mild conditions, we establish the asymptotic behavior of the partial sum process
\[
\left\{\sum\limits_{n=1}^{[Nt]}\big[K(X_n)-\E K(X_n)\big]:\; t\geq 0\right\}
\] 
as $N$ tends to infinity, where $[Nt]$ is the integer part of $Nt$ for $t\geq 0$.
\noindent

\vskip.2cm \noindent {\it Keywords:}  Linear process, Long memory, Domain of attraction of stable law,  Limit theorem

\vskip.2cm \noindent {\it Subject Classification: Primary 60F05, 60G10;  Secondary 60E07, 60E10.}
\end{abstract}

\section{Introduction}

Let $X=\{X_n: n\in\mathbb{N}\}$ be a linear process defined by
\begin{align} \label{lp}
X_n=\sum^{\infty}_{i=1} a_i\varepsilon_{n-i}, 
\end{align}
where the innovations $\varepsilon_i$ are independent and identical distributed (i.i.d.) random variables belonging to the domain of attraction of an $\alpha$-stable law with $0<\alpha<2$, $\varepsilon_1$ is centered for $\alpha>1$ and is symmetric for $\alpha=1$, $a_i\sim i^{-\beta} \ell(i)$ with $\alpha\beta>1$ and $\ell$ being a slowly varying function at $\infty$.  Here $\sim$ indicates that the ratio of both sides tends to $1$ as $i$ tends to infinity. So $a_i$ is a regularly varying function at $\infty$ with index $-\beta$. For the innovation $\varepsilon_1$, by Theorem 2.6.1 in \cite{IL}, there exist nonnegative constants $\sigma_1$ and $\sigma_2$ such that
\begin{align} \label{h}
\mathbb{P}(\varepsilon_1\leq -x)&=(\sigma_1+o(1))x^{-\alpha} h(x)\quad \text{and}\quad \mathbb{P}(\varepsilon_1>x)=(\sigma_2 +o(1))x^{-\alpha} h(x)
\end{align}
as $x$ tends to infinity, where $\sigma_1, \sigma_2\geq 0$, $\sigma_1+\sigma_2>0$ and $h(x)$ is a positive slowly varying function at $\infty$. It is easy to see that $\sum\limits^{\infty}_{i=1} |a_i|^{\alpha}h(|a_i|^{-1})<\infty$. Then, according to \cite{A} (or Proposition 5.4 in \cite{BJL}), the linear process $X=\{X_n: n\in\mathbb{N}\}$ defined in \eref{lp} is a.s. convergent. Clearly, the linear process $X=\{X_n: n\in\mathbb{N}\}$ has infinite variance. 

There is not yet a complete agreement on the definition of short and long memory for linear processes with infinite variance. A new definition of short and long memory for general linear processes was proposed in Definition 2.1 of \cite{SSX}. This definition relies on the coefficients $a_i$ and the behavior of the characteristic function of the innovation around the origin. It is consistent with existing ones and works very well for linear processes with innovations in the domain of attraction of $\alpha$-stable law because of Theorem 2.6.5 in \cite{IL} for $\alpha\neq 1$ and Theorem 2 in \cite{AD} for $\alpha=1$. According to Definition 2.1 in \cite{SSX}, the linear process $X=\{X_n: n\in\mathbb{N}\}$ defined in \eref{lp} has short memory if $\alpha\beta>2$ and long memory if $\alpha\beta\in (1,2)$.

Limit theorems for functionals of linear processes with finite variance have been well studied. For linear processes with infinite variance, much attention is paid to the asymptotic behavior of the partial sum process
\begin{align} \label{ps}
\bigg\{ S_{[Nt]}:=\sum^{[Nt]}_{n=1}\big[K(X_n)-\mathbb{E}K(X_n)\big] :\, t\geq 0\bigg\}
\end{align}
as $N$ tends to infinity, where $X=\{X_n: n\in\mathbb{N}\}$ is the linear process defined in \eref{lp} and $K$ is a proper real-valued measurable function. For the short memory case $\alpha\beta>2$, if the innovation $\varepsilon_1$ is a symmetric $\alpha$-stable random variable and $\sum^{\infty}_{i=1}|a_i|^{\alpha/2}<\infty$ (without the restriction $a_i\sim i^{-\beta} \ell(i)$), then, under certain conditions, the weak convergence of $N^{-\frac{1}{2}}S_N$ to a normal distribution was established in \cite{Hsing}. This result was extended to two-sided linear processes with  innovations in the domain of attraction of $\alpha$-stable law and less conditions  in \cite{PT} and to the multidimensional case in \cite{W}. For the long memory case where  $1<\alpha\beta<2$ and $\beta<1$, if the innovations are in the normal domain of attraction of an $\alpha$-stable law (see the definition on page 92 in \cite{IL}) and $a_i\sim c_0 i^{-\beta}$, then under some additional assumptions, the weak convergence of $N^{\beta-\frac{1}{\alpha}-1}S_N$  to an $\alpha$-stable distribution was established in \cite{KS}. Under a strong condition (see Condition 1 in \cite{W}), Wu extended the result in \cite{KS} to the case when innovations are in the domain of attraction of $\alpha$-stable law and $a_i\sim i^{-\beta}\ell(i)$ in \cite{W}. For the long memory case where  $1<\alpha\beta<2$ and $\beta>1$, if the innovations are in the normal domain of attraction of an $\alpha$-stable law and $a_i\sim c_0 i^{-\beta}$, then, under some additional assumptions, the convergence of finite-dimensional distributions of $\{N^{-\frac{1}{\alpha\beta}}S_{[Nt]}:\, t\in [0,1]\}$ to an $\alpha\beta$-stable process was obtained in \cite{S} for the case $\alpha\in(1,2)$ and in \cite{Honda} for the case $\alpha\in (0,1)$. 

In this paper, we will develop a new methodology to establish the asymptotic behavior of the partial sum process (\ref{ps}) under mild conditions. Before stating our results, we first make the following three assumptions:
\begin{enumerate}
\item[({\bf A1})] $K$ is a real-valued integrable and square integrable function on $\mathbb{\R}$, 
\item[({\bf A2})] There exist strictly positive constants $c$ and $\delta$ such that the characteristic function $\phi_{\varepsilon}(u)$ of the innovation $\varepsilon_1$ satisfies $|\phi_{\varepsilon}(u)|\leq \frac{c}{1+|u|^{\delta}}$ for all $u\in\mathbb{R}$,
\item[({\bf A3})]  the slowly varying function $\ell: (0,\infty)\to (0,\infty)$ has the Karamata representation 
\[
\ell(x)=\sigma e^{\int^x_1 \frac{\eta(t)}{t}}dt, \quad x>0,
\] 
where $\sigma>0$ and $\lim\limits_{t\to\infty}\eta(t)=0$.
\end{enumerate}
The assumption ({\bf A2}) and $a_i\sim i^{-\beta}\ell(i)$ imply that  (i) the linear process $X=\{X_n: n\in\mathbb{N}\}$ defined in \eref{lp} has a bounded and differentiable probability density function $f(x)$ and (ii) the characteristic function $\phi(u)$ of $X_n$ decays faster than any polynomial rate to $0$ as $|u|$ tends to infinity. Moreover, ({\bf A2}) implies that there exists $m\in\mathbb{N}$ such that $|\phi_{\varepsilon}(u)|^m$ is less than a constant multiple of $\frac{1}{1+|u|^4}$.  The assumption ({\bf A3}) is only used in Theorems \ref{thm2} and \ref{thm3}. We make assumption ({\bf A3}) to simplify the notation. According to the Karamata representation of a slowly varying function, $\ell$ has the expression 
$\ell(x)=\sigma(x) e^{\int^x_1 \frac{\eta(t)}{t}}dt$ where $\lim\limits_{x\to\infty}\sigma(x)=\sigma\neq 0$ and $\lim\limits_{t\to\infty}\eta(t)=0$. Recall that $a_i\sim i^{-\beta}\ell(i)$ and $\sim$ indicates that the ratio of both sides tends to $1$ as $i$ tends to infinity. So we can assume that $\ell$ satisfies the assumption ({\bf A3}). Moreover, there is another advantage of making the assumption ({\bf A3}). If assumption ({\bf A3}) holds, then it is easy to see that there exists a constant $A>0$ such that the regularly varying function 
\begin{align} \label{lbx}
\ell_{\beta}(x):=x^{\frac{1}{\beta}}\ell^{\frac{1}{\beta}}(x^{\frac{1}{\beta}})
\end{align}
is continuous and strictly increasing on the interval $(A,\infty)$. This is needed in the definition of the normalizing factors in Theorems \ref{thm2} and \ref{thm3}.

We next introduce some notations. For any $\beta\in (\frac{1}{\alpha}, \frac{1}{\alpha}+1)$ and $t\geq 0$, define 
\[
Z^{\alpha, \beta}_t=\int^t_{-\infty} [(t-s)^{1-\beta}-(-s)^{1-\beta}1_{\{s<0\}}] dZ^{\alpha}_s,
\] 
where $Z^{\alpha}=\{Z^{\alpha}_t: t\geq 0\}$ is an $\alpha$-stable process whose characteristic function has the form
\[
\mathbb{E} e^{\iota u Z^{\alpha}_t}= \begin{cases} \exp\Big(-t|u|^{\alpha}(1-\iota \frac{\sigma_2-\sigma_1}{\sigma_1+\sigma_2}\tan \frac{\pi \alpha}{2} \sgn u)\Big)
	 & \text { for } \alpha \neq 1 \\ \\
	 \exp\Big(-t|u|^{\alpha}\Big)
	 & \text { for } \alpha=1\end{cases}
\]
with $\iota=\sqrt{-1}$ and $\sigma_i$ $(i=1,2)$ being the same constants as in (\ref{h}).

According to \cite{A}, the process $Z^{\alpha,\beta}=\{Z^{\alpha,\beta}_t: t\geq 0\}$ is $\alpha$-stable, self-similar with index $\frac{1}{\alpha}+1-\beta$ and has stationary increments; the trajectories of $Z^{\alpha,\beta}$ are continuous when $\beta<1$ and discontinuous when $\beta\geq 1$. For the slowly varying function $h$ in (\ref{h}), by Proposition 1 (iv) in \cite{A}, up to asymptotic equivalence, there exists a unique slowly varying function $h_{\alpha}$ such that $h(N^{\frac{1}{\alpha}}h^{\frac{1}{\alpha}}_{\alpha}(N))\sim h_{\alpha}(N)$ as $N\to\infty$.  Given $A>0$. Suppose that $g$ is a nondecreasing function on $(A,\infty)$ with range $(0,\infty)$.  With the convention that the infimum of an empty set is $\infty$. We define the inverse $g^{\leftarrow}: (0,\infty)\to (A,\infty)$ of $g$ as 
\[
g^{\leftarrow}(x)=\inf\{s>A: g(s)\geq x\}
\]
and let $K_{\infty}(x)=\mathbb{E} K(X_1+x)$ for each $x\in\R$.

Now we can address the main results in this paper.

\begin{theorem} \label{thm1} Under the assumptions  ({\bf A1}) and ({\bf A2}), if $1<\alpha<2$ and $\frac{1}{\alpha}<\beta<1$, then, as $N$ tends to infinity,
\begin{enumerate}
\item[(i)]
	The finite-dimensional distributions of $\left\{N^{\beta-\frac{1}{\alpha}-1}\ell^{-1}(N)h_{\alpha}^{-1/\alpha}(N)S_{[Nt]}: t\geq 0\right\}$ converge to those of $\big\{ \widetilde{c} \, Z^{\alpha, \beta}_t: t\geq 0\big\}$;
\item[(ii)]  $\left\{N^{\beta-\frac{1}{\alpha}-1}\ell^{-1}(N)h_{\alpha}^{-1/\alpha}(N)S_{[Nt]}: t\in [0,1]\right\}$ converges in law to $\big\{ \widetilde{c} \, Z^{\alpha, \beta}_t: t\in [0,1]\big\}$ in $D([0,1])$ endowed with the Skorohod topology,
\end{enumerate}
where 
\[
\widetilde{c}=\frac{1}{1-\beta}(\sigma_1+\sigma_2)^{\frac{1}{\alpha}}\Big(\Gamma(\alpha-1) \cos (\frac{\pi \alpha}{2})\Big)^{\frac{1}{\alpha}}\Big(-\int_{\mathbb{R}}K(x)df(x)\Big).
\]
\end{theorem}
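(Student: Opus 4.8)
\noindent\textit{Proof strategy.}
The plan is to reduce the nonlinear statistic to the partial sums of the linear process itself by a \emph{first-order expansion}. Under (\textbf{A1})--(\textbf{A2}) one expects the identity
\[
K(X_n)-\E K(X_n)=\Big(-\int_{\R}K(x)\,df(x)\Big)X_n+\widetilde R_n ,
\]
where $-\int_{\R}K\,df=K_\infty'(0)$ is the exact analogue of the first Hermite coefficient and $\widetilde R_n$ is a remainder which, summed over $n\le[Nt]$ and normalized, is negligible. Granting this, $N^{\beta-\frac1\alpha-1}\ell^{-1}(N)h_\alpha^{-1/\alpha}(N)S_{[Nt]}$ equals, up to an $o_{\P}(1)$ term, $\big(-\int_{\R}K\,df\big)$ times $N^{\beta-\frac1\alpha-1}\ell^{-1}(N)h_\alpha^{-1/\alpha}(N)\sum_{n=1}^{[Nt]}X_n$, and the latter converges in the finite-dimensional sense to $\tfrac{1}{1-\beta}(\sigma_1+\sigma_2)^{1/\alpha}\big(\Gamma(\alpha-1)\cos\tfrac{\pi\alpha}{2}\big)^{1/\alpha}Z^{\alpha,\beta}_t$ by \cite{A}; multiplying by $-\int_{\R}K\,df$ reproduces precisely $\widetilde c$. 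So Theorem~\ref{thm1}(i) follows from the expansion and Slutsky's theorem, and (ii) will follow once tightness is added.

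To prove the expansion I would work on the Fourier side. By (\textbf{A1}), $\widehat K\in L^2\cap L^\infty$, and by (\textbf{A2}) the characteristic function $\phi$ of $X_n$ decays faster than any polynomial, so $K(X_n)-\E K(X_n)=\frac{1}{2\pi}\int_{\R}\widehat K(u)\big(e^{\iota uX_n}-\phi(u)\big)\,du$, and this integral may be interchanged with the sum over $n$. For fixed $u$, expand $e^{\iota uX_n}-\phi(u)=\sum_{j\ge1}P_{n-j}e^{\iota uX_n}$ into martingale differences along the lagged innovations, where $P_k=\E[\,\cdot\,|\,\sigma(\varepsilon_i:i\le k)]-\E[\,\cdot\,|\,\sigma(\varepsilon_i:i\le k-1)]$; explicitly $P_{n-j}e^{\iota uX_n}=\big(\prod_{i<j}\phi_\varepsilon(a_iu)\big)e^{\iota uX_{n,j}}\big(e^{\iota ua_j\varepsilon_{n-j}}-\phi_\varepsilon(a_ju)\big)$ with $X_{n,j}=\sum_{i>j}a_i\varepsilon_{n-i}$. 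Because $\alpha>1$ the innovations are centered, so for large lag (where $a_ju\to0$, $\prod_{i<j}\phi_\varepsilon(a_iu)\to\phi(u)$ and $e^{\iota uX_{n,j}}\to1$) one has $P_{n-j}e^{\iota uX_n}\approx\iota u\,\phi(u)\,a_j\varepsilon_{n-j}$; summing over $j$ gives $e^{\iota uX_n}-\phi(u)\approx\iota u\,\phi(u)\,X_n$, and reinserting into the Fourier integral and using Parseval identifies $\frac{1}{2\pi}\int\iota u\,\widehat K(u)\phi(u)\,du=K_\infty'(0)=-\int_{\R}K\,df$. (Equivalently one can Taylor-expand $K$ itself around the lagged partial sums using the smooth truncations $K_m(x)=\E K\big(\sum_{l\le m}a_l\varepsilon_l'+x\big)$, which by the bound $\sup_x|K_m^{(r)}(x)|\le\frac1{2\pi}\int|u|^r|\widehat K(u)|\prod_{l\le m}|\phi_\varepsilon(a_lu)|\,du$ are $C^2$ with $K_m\to K_\infty$ and $K_m^{(r)}\to K_\infty^{(r)}$.)

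The heart of the matter is the bound $\big\|\sum_{n=1}^{[Nt]}\widetilde R_n\big\|_p=o\big(N^{\frac1\alpha+1-\beta}\ell(N)h_\alpha^{1/\alpha}(N)\big)$ for a suitable $p\in(1,\alpha)$; since $\E\varepsilon_1^2=\infty$, no $L^2$ argument is available. The remainder gathers (a) the second-order errors $e^{\iota ua_j\varepsilon_{n-j}}-\phi_\varepsilon(a_ju)-\iota ua_j\varepsilon_{n-j}$, for which the truncated-moment (Karamata) estimate $\E\big[\,|a_j\varepsilon_1|^p\min(1,|a_j\varepsilon_1|)^p\big]\lesssim|a_j|^\alpha h(|a_j|^{-1})$ replaces the useless $a_j^2\E\varepsilon_1^2$; and (b) the ``coefficient-deviation'' errors from $\prod_{i<j}\phi_\varepsilon(a_iu)e^{\iota uX_{n,j}}-\phi(u)$, which are themselves centered in the tail innovations and call for a further layer of the martingale decomposition. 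Reorganizing $\sum_{n\le[Nt]}\widetilde R_n$ as a martingale-difference sum indexed by the innovations and applying a von Bahr--Esseen / Burkholder-type $L^p$-inequality ($1\le p\le2$), the two conditions on $\beta$ play opposite and decisive roles: $\alpha\beta>1$ makes $\sum_i|a_i|^\alpha h(|a_i|^{-1})<\infty$, so the higher-order contributions are summable in the lag, while $\beta<1$ makes the first-order weights accumulate, $\sum_{j\le J}a_j\asymp J^{1-\beta}\ell(J)$; together they force the remainder below the order $N^{\frac1\alpha+1-\beta}$. Verifying that the exponent bookkeeping actually closes — that one may pick $p$ with $\tfrac1\beta<p<\alpha$ (note $\tfrac1\alpha<\beta<1$ forces $\tfrac1\beta\in(1,\alpha)$) while absorbing all the slowly varying factors $\ell,h,h_\alpha$ — is where the restrictions $1<\alpha<2$ and $\tfrac1\alpha<\beta<1$ are genuinely needed.

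The step I expect to be the main obstacle is exactly this last one: making the Fourier estimates on $K_m,K_m',K_m''$ uniform (they degenerate at small lags where $\prod_{i\le j}|\phi_\varepsilon(a_iu)|$ has not yet gained enough decay, so finitely many ``recent-innovation'' terms must be peeled off and shown to aggregate into a short-memory fluctuation of order $N^{1/\alpha}$ times a slowly varying function, hence $o(N^{\frac1\alpha+1-\beta}\ell(N)h_\alpha^{1/\alpha}(N))$ since $\beta<1$), and then carrying the whole $L^p$ analysis through with $1<p<\alpha$ rather than $p=2$, substituting a truncated-moment bound for every appeal to a second moment. Once the expansion and this estimate are in place, part~(i) is immediate from Slutsky, and part~(ii) follows by combining the functional convergence of the normalized partial sums of $\{X_n\}$ on $D([0,1])$ (routine tightness, since $Z^{\alpha,\beta}$ has continuous paths for $\beta<1$) with a maximal version of the remainder estimate.
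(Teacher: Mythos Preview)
Your overall strategy matches the paper's: reduce $S_N$ to $U_N=\big(-\int_{\R}K\,df\big)\sum_{n\le N}X_n$ via a first-order (Fourier/martingale) expansion, invoke Astrauskas for the limit of the linear partial sums, and treat tightness separately. The identification $-\int_{\R}K\,df=K_\infty'(0)$ and the appearance of von Bahr--Esseen are exactly right.

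Where the paper differs, and where your reasoning has a small misconception, is in the claim that ``no $L^2$ argument is available.'' Under (\textbf{A1})--(\textbf{A2}) the density $f$ is bounded, so $K(X_n)\in L^2$ even though $X_n\notin L^2$. The paper exploits this by inserting an intermediate quantity
\[
T_N=\sum_{n=1}^N\sum_{j\ge1}\Big(\E\big[K(X_n+a_j\widetilde\varepsilon_{n-j})\,\big|\,\varepsilon_{n-j}\big]-\E K(X_n+a_j\widetilde\varepsilon_{n-j})\Big)
=\sum_{n=1}^N\sum_{j\ge1}\big(K_\infty(a_j\varepsilon_{n-j})-\E K_\infty(a_j\varepsilon_{n-j})\big),
\]
and splitting the remainder as $(S_N-T_N)+(T_N-U_N)$. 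The first piece---which contains your ``coefficient-deviation'' errors (b)---is handled \emph{entirely in $L^2$} via orthogonality of the projections $P_{n-j}$ (the paper's Proposition~\ref{prop}), giving $\E|S_N-T_N|^2\lesssim N^{4-2\alpha'\beta'}$. Only the second piece, containing your second-order errors (a), requires the von Bahr--Esseen $L^r$ bound, and here the paper takes $r\in(1,\alpha\beta)$ (not $p\in(1/\beta,\alpha)$ as you propose) together with an auxiliary exponent $\delta\in(1/\beta,\alpha/r)$ in the elementary inequality $|e^{\iota x}-1-\iota x|\le c_\delta|x|^\delta$; this yields $\E|T_N-U_N|^r\lesssim N$. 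Your single-$L^p$ route should also close, but it forces you to run the delicate martingale moment bounds on the full remainder rather than isolating the part where plain $L^2$ orthogonality does the work, so the bookkeeping is heavier.

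For tightness the paper does not use a maximal inequality. Instead it invokes Billingsley's Theorem~13.5 (the two-sided increment criterion), uses stationarity to reduce the probability $\P(|S_{[Nt]}-S_{[Nt_1]}|\ge\lambda\,\cdot)$ to $\P(|S_M|\ge\lambda\,\cdot)$ with $M=[Nt]-[Nt_1]$, and then applies the same $L^2$ and $L^r$ bounds for $S_M-T_M$, $T_M-U_M$, together with the known tightness estimate for $U_M$ from \cite{A}. This avoids having to upgrade your remainder estimate to a maximal version.
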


\begin{theorem} \label{thm2} Under the assumptions  ({\bf A1}), ({\bf A2}) and ({\bf A3}), if $\alpha\beta\in (1,2)$, $\beta>1$ and 
\begin{align} \label{ell}
\lim\limits_{x\to\infty} \frac{\ell(x\ell^{\frac{1}{\beta}}(x))}{\ell(x)}=1,
\end{align}
then the finite-dimensional distributions of $\Bigg\{\frac{1}{\Big(\frac{(\ell^{\leftarrow}_{\beta})^{\alpha}}{h\circ (\ell^{\leftarrow}_{\beta})}\Big)^{\leftarrow}(N)} S_{[Nt]}: t\geq 0 \Bigg\}$ converge to those of  $\Big\{(\gamma_2+\gamma_1)^{\frac{1}{\alpha\beta}}\big(\overline{c}\, t^{\frac{1}{\alpha\beta}}+\mathcal{Z}^{\alpha\beta}_t \big): t\geq 0\Big\}$ as $N$ tends to infinity, where  
\begin{align*}
\gamma_2&=\sigma_2 (C^+_K)^{\alpha\beta} 1_{\{C^+_K>0\}}+\sigma_1 (C^-_K)^{\alpha\beta} 1_{\{C^-_K>0\}}\;\; \text{and}\;\; \gamma_1=\sigma_2 |C^+_K|^{\alpha\beta} 1_{\{C^+_K<0\}}+\sigma_1 |C^-_K|^{\alpha\beta} 1_{\{C^-_K<0\}}
\end{align*}
with 
\[
C^{\pm}_K=\int^{\infty}_0 (K_{\infty}(\pm t^{-\beta})-K_{\infty}(0))dt,
\] 
\begin{align*}
\overline{c}=\frac{\gamma_2-\gamma_1}{\gamma_2+\gamma_1}\left[\frac{1}{\alpha\beta-1}+\alpha\beta\left(\int^{\infty}_1\frac{x^{-\alpha\beta}}{x^2+1}dx-\int^1_0\frac{x^{2-\alpha\beta}}{x^2+1}dx\right)+\int^{\infty}_0  \frac{x^{2-\alpha\beta}(x^{2}+3)}{(x^2+1)^2} dx \right] 
\end{align*}
and $\mathcal{Z}^{\alpha\beta}=\{\mathcal{Z}^{\alpha\beta}_t:\, t\geq 0\}$ is an $\alpha\beta$-stable process whose characteristics function has the form
\begin{align*} %\label{Z}
\mathbb{E} e^{\iota u \mathcal{Z}^{\alpha\beta}_t} \nonumber
&=\exp\Bigg(-t\left(\int^{\infty}_0  \frac{\sin x}{x^{\alpha\beta}} dx\right)|u|^{\alpha\beta}\left[1-\iota \frac{\gamma_2-\gamma_1}{\gamma_2+\gamma_1} \sgn(u)\tan(\frac{\pi\alpha\beta}{2})\right]\Bigg).
\end{align*}
\end{theorem}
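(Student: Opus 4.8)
\emph{Outline of the argument.}\quad The plan is to establish the stated finite-dimensional convergence of $b_N^{-1}S_{[Nt]}$, with $b_N$ the normalizing factor of the theorem, by reducing $S_{[Nt]}$ to a sum over the individual innovations and then invoking the classical limit theory for sums of independent heavy-tailed summands (as in \cite{IL}). Since the $X_n$ are identically distributed, $S_{[Nt]}=\sum_{n=1}^{[Nt]}[K(X_n)-K_\infty(0)]$. Put $\F_k=\sigma(\varepsilon_j:j\le k)$ and telescope along this filtration:
\[
K(X_n)-K_\infty(0)=\sum_{k\le n-1}\De_{n,k},\qquad \De_{n,k}:=\E[K(X_n)\mid\F_k]-\E[K(X_n)\mid\F_{k-1}],
\]
so that $S_{[Nt]}=\sum_{k\le[Nt]-1}\Psi_k$ with $\Psi_k:=\sum_{n=(k+1)\vee1}^{[Nt]}\De_{n,k}$, a martingale-difference sequence with respect to $(\F_k)$. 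Averaging out the innovations with index strictly between $k$ and $n$ yields the explicit form $\De_{n,k}=\wh K_{n,k}(W_{n,k}+a_{n-k}\varepsilon_k)-\mathbb E'[\wh K_{n,k}(W_{n,k}+a_{n-k}\varepsilon')]$, where $\wh K_{n,k}$ is the convolution of $K$ with the law of $\sum_{k<j<n}a_{n-j}\varepsilon_j$, $W_{n,k}=\sum_{j<k}a_{n-j}\varepsilon_j$ is $\F_{k-1}$-measurable, $\varepsilon'$ is an independent copy of $\varepsilon_1$, and $\mathbb E'$ integrates only over $\varepsilon'$. Assumptions ({\bf A1}) and ({\bf A2}) ensure that $K$ and all the $\wh K_{n,k}$ are integrable, bounded and smooth with uniformly controlled norms (and that $\hat K\in L^1\cap L^2$ is available for the error and characteristic-function estimates), which is what makes the analysis below feasible.

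The second step is to extract the dominant, ``one-innovation-at-a-time'' term. Replacing each $\wh K_{n,k}$ by $K_\infty$ and discarding the $\F_{k-1}$-measurable shift $W_{n,k}$ suggests
\[
\Psi_k\ \approx\ q_k(\varepsilon_k),\qquad q_k(v):=\sum_{n=(k+1)\vee1}^{[Nt]}\bigl[K_\infty(a_{n-k}v)-K_\infty(0)\bigr],
\]
where we suppress the dependence of $q_k$ on $N$ and $t$ and the index $i=n-k$ runs from $\max\{1,1-k\}$ to $[Nt]-k$. Using $a_i\sim i^{-\beta}\ell(i)$, the Karamata form ({\bf A3}) and the condition \eqref{ell} to legitimate the substitution $a_iv\approx s^{-\beta}$, one finds, for $|v|$ large, $q_k(v)\sim\ell_\beta(|v|)\,\varrho_k\bigl(C^{+}_K 1_{\{v>0\}}+C^{-}_K 1_{\{v<0\}}\bigr)$, where $\varrho_k\in[0,1]$ measures how much of the defining integral of $C^{\pm}_K$ the window $\{(k+1)\vee1,\dots,[Nt]\}$ captures: $\varrho_k=1$ for ``bulk'' indices $1\le k\le[Nt]$ lying further than $\ell_\beta(|v|)$ from $[Nt]$, while within the two edge zones of width of order $\ell_\beta(|v|)$ (near the right endpoint and at the ``outside'' indices $k\le0$) only a proper sub-integral survives. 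Thus $b_N^{-1}S_{[Nt]}$ is, up to errors to be shown negligible, the row-sum $\sum_{k\le[Nt]}b_N^{-1}q_k(\varepsilon_k)$ of a triangular array of independent, infinitesimal summands. Since $\varepsilon_1$ lies in the domain of attraction of the $\alpha$-stable law with tails \eqref{h} and $\ell^{\leftarrow}_\beta$ is regularly varying of index $\beta$, the random variable $\ell_\beta(|\varepsilon_1|)$ has a regularly varying tail of index $-\alpha\beta$, so each summand belongs to the domain of attraction of an $(\alpha\beta)$-stable law; the normalization $b_N$ is precisely the one for which $N\,\P\bigl(\ell_\beta(|\varepsilon_1|)>b_N\bigr)$ stays bounded.

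It then remains to compute $\lim_{N\to\infty}\E\exp\bigl(\iota u\,b_N^{-1}S_{[Nt]}\bigr)$, together with its multivariate analogue for $(S_{[Nt_1]},\dots,S_{[Nt_d]})$, which delivers the joint statement by the Cram\'er--Wold device. Writing $\log\E\exp(\iota u\,b_N^{-1}S_{[Nt]})\approx\sum_k\E\bigl[e^{\iota u b_N^{-1}q_k(\varepsilon_k)}-1-\iota u b_N^{-1}q_k(\varepsilon_k)\bigr]$ (using $\E q_k(\varepsilon_k)=0$ after recentering), splitting the sum over $k$ into bulk indices, boundary indices near $[Nt]$, and outside indices $k\le0$, and in each piece inserting the $\alpha$-stable tail \eqref{h} of $\varepsilon_1$ and performing the $\ell_\beta$-change of variables, one should obtain in the limit exactly the characteristic function of $(\gamma_2+\gamma_1)^{1/(\alpha\beta)}\bigl(\ol c\,t^{1/(\alpha\beta)}+\mathcal Z^{\alpha\beta}_t\bigr)$: the constants $\gamma_1,\gamma_2$ record, through the signs of $C^{\pm}_K$ weighted by $\sigma_1,\sigma_2$, which contributions are positive and which negative; the asymmetry $\tfrac{\gamma_2-\gamma_1}{\gamma_2+\gamma_1}$ and the factor $\int_0^\infty x^{-\alpha\beta}\sin x\,dx$ emerge from the same computation; and the deterministic component $\ol c\,t^{1/(\alpha\beta)}$---necessarily sublinear in $t$, reflecting that the limiting increments are independent but not stationary---is produced by the combined truncated means of the $q_k(\varepsilon_k)$ over the bulk and the two edge zones, the change of variables $k=[Nt]-d$ (resp. $k=-d$) with $d$ of order $b_N$ and $\varepsilon_k$ rescaled turning these into the three explicit integrals that make up $\ol c$. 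This is the genuine long-memory signature of the regime $\beta>1$, $\alpha\beta\in(1,2)$. Only finite-dimensional convergence is asserted, so no tightness argument is needed.

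I expect the main difficulty to lie in the error control rather than in the identification of the limit. In the $(\alpha\beta)$-stable scaling, which is sensitive to the extreme tails of the summands, one must show that after division by $b_N$ the following vanish: (i) the difference between $\Psi_k$ and $q_k(\varepsilon_k)$---i.e. replacing $\wh K_{n,k}$ by $K_\infty$, removing $W_{n,k}$, and passing from sums to integrals---which requires uniform regular-variation bounds on $v\mapsto q_k(v)$ and on the true $\Psi_k$ near $v=\pm\infty$, valid uniformly in $k$, in $N$ and in the ``old'' randomness $\{\varepsilon_j:j<k\}$, and is where the boundedness and super-polynomial decay of $f$ and of $\phi$ from ({\bf A2}) and the condition \eqref{ell} really enter; and (ii) the second-order interactions---two large innovations falling in the same window, or a large $\varepsilon_k$ meeting a large shift $W_{n,k}$---so that the array is genuinely asymptotically independent and the passage to the stable limit is legitimate; here the martingale-difference structure of $\sum_k\bigl(\Psi_k-q_k(\varepsilon_k)+\mathbb E' q_k(\varepsilon')\bigr)$ should furnish moment bounds for the remainder after a suitable truncation. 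Carrying out (i) and (ii), and within (i) pinning down the exact values of $C^{\pm}_K$, $\gamma_1$, $\gamma_2$, and in particular the precise combination of integrals defining $\ol c$, is the technical heart of the proof.
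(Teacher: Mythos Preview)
Your starting point---the martingale-difference telescoping $S_{[Nt]}=\sum_k\Psi_k$ and the approximation of each $\Psi_k$ by a ``one-innovation'' functional of $\varepsilon_k$---matches the paper's. Where you diverge is in what you do next, and there the proposal contains both an unnecessary complication and a misconception.

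The paper does \emph{not} work with your window-dependent $q_k$. After replacing $\De_{n,k}$ by $K_\infty(a_{n-k}\varepsilon_k)-\E K_\infty(a_{n-k}\varepsilon_k)$ (this is the content of Proposition~\ref{prop}, proved by Fourier/Plancherel and the decay of $\phi$ coming from ({\bf A2})), it makes a second reduction that you miss: it shows (Lemma~\ref{lm51}) that the boundary pieces $R_{N,1}$ and $R_{N,2}$---exactly what distinguishes your truncated $q_k$ from the full series $\eta_K(\varepsilon_k)=\sum_{j\ge1}(K_\infty(a_j\varepsilon_k)-\E K_\infty(a_j\varepsilon_k))$---are negligible after division by $b_N$. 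The upshot is that $S_{[Nt]}$ is approximated by $\mathcal T_{[Nt]}=\sum_{n=1}^{[Nt]}\eta_K(\varepsilon_n)$, a genuine i.i.d.\ partial sum. One then checks (Proposition~\ref{prop52}) that $\eta_K(\varepsilon_1)$ lies in the domain of attraction of an $\alpha\beta$-stable law by computing $\lim_{x\to\pm\infty}\eta_K(x)/\ell_\beta(|x|)=C^{\pm}_K$ and transferring the $\alpha$-tails of $\varepsilon_1$ through $\ell_\beta$; the classical stable limit theorem finishes the job.

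This means your attribution of the drift $\ol c\,t^{1/(\alpha\beta)}$ to ``edge zones'' is wrong. In the paper the edge contributions vanish in the limit; the drift comes entirely from the centering in the classical stable CLT for the i.i.d.\ summands $\eta_K(\varepsilon_n)$ (the three integrals in $\ol c$ are exactly the pieces one gets when rewriting the L\'evy--Khintchine exponent of the $\alpha\beta$-stable limit and extracting the truncated-mean contribution $\tfrac{\gamma_2-\gamma_1}{(\alpha\beta-1)(\gamma_2+\gamma_1)}$). Your triangular-array route could in principle be pushed through, but you would discover that the boundary sums are $o(b_N)$ and all of $\ol c$ is already produced by the bulk.

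Finally, your error control is too schematic. The paper's reductions rely on concrete tools you do not invoke: the Fourier representation $K_\infty(a_j\varepsilon)-\E K_\infty(a_j\varepsilon)=(2\pi)^{-1}\int\widehat K(u)\phi(-u)(e^{-\iota u a_j\varepsilon}-\phi_\varepsilon(-a_ju))\,du$ together with the super-polynomial decay of $\phi$ from ({\bf A2}); the pointwise bound (\ref{Ksum}); and the von Bahr--Esseen inequality to control $L^r$ norms with $1<r<\alpha$ of sums of independent mean-zero terms. Without these, statements like ``martingale-difference structure should furnish moment bounds'' remain hopes rather than proofs, especially since no second moments are available here.
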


\begin{theorem} \label{thm3} Under the assumptions ({\bf A1}), ({\bf A2}) and ({\bf A3}), if $1<\alpha<2$, $\frac{1}{\alpha}<\beta\leq 1$, $\int_{\mathbb{R}}K(x)df(x)=0$ and 
\begin{align*} \label{ell}
\lim\limits_{x\to\infty} \frac{\ell(x\ell^{\frac{1}{\beta}}(x))}{\ell(x)}=1,
\end{align*}
then the finite-dimensional distributions of $\Bigg\{\frac{1}{\Big(\frac{(\ell^{\leftarrow}_{\beta})^{\alpha}}{h\circ (\ell^{\leftarrow}_{\beta})}\Big)^{\leftarrow}(N)} S_{[Nt]}: t\geq 0 \Bigg\}$ converge to those of $\Big\{ (\gamma_2+\gamma_1)^{\frac{1}{\alpha\beta}}\big(\overline{c}\, t^{\frac{1}{\alpha\beta}}+\mathcal{Z}^{\alpha\beta}_t\big): t\geq 0\Big\}$ as $N$ tends to infinity, where $\gamma_2$, $\gamma_1$,  $\overline{c}$ and $\mathcal{Z}^{\alpha\beta}=\{\mathcal{Z}^{\alpha\beta}_t:\, t\geq 0\}$ are defined as in Theorem \ref{thm2}.
\end{theorem}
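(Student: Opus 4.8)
The plan is to follow the route of the proof of Theorem~\ref{thm2}: by isolating single innovations one reduces $S_{[Nt]}$ to a sum of nearly i.i.d.\ random variables attracted to an $\alpha\beta$-stable law, and in the range $\beta\le 1$ the hypothesis $\int_{\R}K(x)\,df(x)=0$ plays the role that the condition $\beta>1$ plays in Theorem~\ref{thm2}. The main difficulty will be the control of the error term, where the exponents are tight for $\beta$ near $1/\alpha$.

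\emph{Reduction.} By ({\bf A1}) we write $K$ through its Fourier transform, $K(x)=\frac{1}{2\pi}\int_{\R}\widehat K(u)e^{\iota ux}\,du$, so that
\[
S_{[Nt]}=\frac{1}{2\pi}\int_{\R}\widehat K(u)\sum_{n=1}^{[Nt]}\big(e^{\iota uX_n}-\phi(u)\big)\,du ,\qquad \phi(u)=\E e^{\iota uX_1}.
\]
Assumption ({\bf A2}) makes $\phi$ decay faster than any polynomial, so the density $f$ of $X_1$ and the function $K_\infty$ are smooth with integrable, rapidly decaying derivatives; in particular $\|K_\infty\|_{L^1}+\|K_\infty\|_{L^\infty}<\infty$, $K_\infty(x)\to0$ as $|x|\to\infty$, and $K_\infty'(0)=-\int_{\R}K(x)\,df(x)=0$. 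Writing $X_n=a_{n-j}\varepsilon_j+X_n^{(j)}$, isolating the single innovation $\varepsilon_j$ and conditioning on it leads to a decomposition $S_{[Nt]}=T_{[Nt]}+R_{[Nt]}$, where $T_{[Nt]}$ is the first-order (one-innovation) part — expressible, after the sum over $n$ is carried out, through the partial sums $\psi_m(y)=\sum_{k=1}^{m}\big(K_\infty(a_ky)-K_\infty(0)\big)$ evaluated at the $\varepsilon_j$'s — and $R_{[Nt]}$ collects the two-or-more-innovation contributions together with the errors from replacing the characteristic function $u\mapsto\phi(u)/\phi_\varepsilon(ua_{n-j})$ of $X_n^{(j)}$ by $\phi$.

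\emph{The leading term.} The key object is $\psi_\infty(y)=\sum_{k=1}^{\infty}\big(K_\infty(a_ky)-K_\infty(0)\big)$. Since $K_\infty'(0)=0$, Taylor's formula gives $K_\infty(a_ky)-K_\infty(0)=O(a_k^{2}y^{2})=O\big(k^{-2\beta}\ell^{2}(k)y^{2}\big)$, and $\sum_k k^{-2\beta}\ell^{2}(k)<\infty$ because $2\beta>2/\alpha>1$; hence $\psi_\infty$ is well defined and, for the same reason, the constants $C^{\pm}_K$ of the statement are finite. (This is exactly the step where the proof of Theorem~\ref{thm2}, in which $K_\infty'(0)$ need not vanish, uses $\beta>1$.) Carrying out the change of variables $a_ky\approx s^{-\beta}$ in the defining sum — legitimised by ({\bf A3}), the normalisation $\lim_{x\to\infty}\ell\big(x\ell^{1/\beta}(x)\big)/\ell(x)=1$ and the regularly varying function $\ell_\beta$ of \eqref{lbx} — one finds that $\psi_\infty$ is regularly varying of index $1/\beta$, with $\psi_\infty(y)\sim C^{+}_K\,\ell_\beta(|y|)$ as $y\to+\infty$ and $\psi_\infty(y)\sim C^{-}_K\,\ell_\beta(|y|)$ as $y\to-\infty$. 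Together with the tails \eqref{h} of $\varepsilon_1$, this shows that $\psi_\infty(\varepsilon_1)$ belongs to the domain of attraction of an $\alpha\beta$-stable law whose tail weights are precisely $\gamma_2,\gamma_1$ and whose norming constants are $c_N=\big(\tfrac{(\ell^{\leftarrow}_\beta)^{\alpha}}{h\circ\ell^{\leftarrow}_\beta}\big)^{\leftarrow}(N)$. As $\alpha\beta>1$, the mean is finite, and one checks that $T_{[Nt]}$ equals, up to terms negligible with respect to $c_N$, the centred sum $\sum_{j\le[Nt]}\big(\psi_\infty(\varepsilon_j)-\E\psi_\infty(\varepsilon_1)\big)$ plus a deterministic quantity which, after division by $c_N$, tends to $(\gamma_2+\gamma_1)^{1/(\alpha\beta)}\,\overline c\,t^{1/(\alpha\beta)}$ — the same skewness- and cut-off-dependent constant $\overline c$ as in Theorem~\ref{thm2}, arising from the exact centering $\E S_{[Nt]}=0$. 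The classical functional limit theorem for partial sums of i.i.d.\ variables attracted to a stable law then yields $c_N^{-1}T_{[Nt]}\Rightarrow(\gamma_2+\gamma_1)^{1/(\alpha\beta)}\big(\overline c\,t^{1/(\alpha\beta)}+\mathcal Z^{\alpha\beta}_t\big)$ in finite-dimensional distributions.

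\emph{The remainder.} It remains to show $c_N^{-1}R_{[Nt]}\to0$, and this is the main technical obstacle. One must bound the second- and higher-order chaos terms and the characteristic-function-replacement errors, showing that none of the intermediate configurations — two moderately large innovations, or one large innovation interacting with the Taylor remainder of $K_\infty$ — contributes at the order $c_N\asymp N^{1/(\alpha\beta)}$. The estimates are those of Theorem~\ref{thm2}: they rest on $\|K_\infty\|_{L^1}+\|K_\infty\|_{L^\infty}<\infty$, on the rapid decay of $\phi$ from ({\bf A2}) (which, through $|\phi_\varepsilon(u)|^{m}\le c(1+|u|^{4})^{-1}$, lets one integrate against $\widehat K$ freely), and on the regular variation of the $a_k$; the only novelty relative to the case $\beta>1$ is that, with $\beta$ possibly close to $1/\alpha$, the exponent bookkeeping is tight, so one repeatedly exploits $K_\infty'(0)=0$ to extract an extra factor $a_k$ wherever a first-order term would otherwise appear. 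Combining this with the previous step completes the proof.
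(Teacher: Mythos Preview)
Your proposal is correct and follows the paper's route: reduce $S_N$ to the i.i.d.\ sum $\mathcal T_N=\sum_{n\le N}\big(\psi_\infty(\varepsilon_n)-\E\psi_\infty(\varepsilon_1)\big)$ (the paper's $\eta_K(\varepsilon_n)$), show $\psi_\infty(\varepsilon_1)$ is attracted to an $\alpha\beta$-stable law via the asymptotics $\psi_\infty(x)\sim C^{\pm}_K\,\ell_\beta(|x|)$ (with $K_\infty'(0)=0$ playing the role that $\beta>1$ plays in Theorem~\ref{thm2}), and control the error. Two minor corrections: there is no separate ``deterministic quantity'' in the decomposition --- both $T_N$ and $\mathcal T_N$ are mean-zero, and the constant $\overline c$ arises entirely from the asymmetric centering in the stable limit theorem itself (Step~3 of Proposition~\ref{prop52} and the proof of Theorem~\ref{thm2}); and in the paper the first reduction $S_N\to T_N$ (Proposition~\ref{prop}, via orthogonal projections) does \emph{not} use $K_\infty'(0)=0$ --- that hypothesis enters only in the second reduction $T_N\to\mathcal T_N$ (Lemma~\ref{lm53}, where it upgrades the bound to $|K_\infty(a_j\varepsilon_n)-\E K_\infty(a_j\varepsilon_n)|\le c\,|a_j|^{\alpha'}(|\varepsilon_n|^{\alpha'}+1)$ with $\alpha'>1$) and in the tail asymptotics of $\psi_\infty$ (Proposition~\ref{prop53}, where it gives the $M^{1-2\beta}$ bound on $\text{I}_3$ in place of the $M^{1-\beta}$ of Theorem~\ref{thm2}).
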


\begin{remark}
(i)  Recall that the characteristic function $\phi(u)$ of $X_n$ decays faster than any polynomial rate to $0$ as $|u|$ tends to infinity. This implies that the derivative of the probability density function $f(x)$ of the linear process $X=\{X_n: n\in\mathbb{N}\}$ is bounded. Hence, $\int_{\mathbb{R}}K(x)df(x)$ in Theorem \ref{thm1} is well-defined.

(ii) Under the assumption ({\bf A3}), we see that 
\[
\ell(x)=\sigma e^{\int^x_1 \frac{\eta(t)}{t} dt}, \quad x>0,
\] 
where $\sigma>0$ and $\lim\limits_{t\to\infty}\eta(t)=0$. If there exists a constant $c_1>0$ such that 
\begin{align} \label{ellchar}
|\eta(t)|\leq \frac{c_1}{(\ln t)^a}
\end{align}
for some $a\in (\frac{1}{2},1)$ and all $t>1$, then, for all $x>1$,
\begin{align} \label{ellbound}
\ln \sigma-\frac{c_1}{1-a}(\ln x)^{1-a}\leq \ln \ell(x)\leq \ln \sigma+\frac{c_1}{1-a}(\ln x)^{1-a}
\end{align}
and
\begin{align*}
\limsup_{x\to\infty} \Big|\int^{x\ell^{\frac{1}{\beta}}(x)}_x \frac{\eta(t)}{t}dt\Big|
&\leq \frac{c_1}{1-a} \limsup_{x\to\infty}  \Big|(\ln x+\frac{1}{\beta} \ln \ell(x))^{1-a}-(\ln x)^{1-a}\Big|\\
&\leq \frac{c_1}{1-a} \limsup_{x\to\infty} \, (\ln x)^{1-a} \Big|(1+\frac{ \ln \ell(x)}{\beta\ln x})^{1-a}-1\Big|\\
&=0,
\end{align*}
where we use mean value theorem, (\ref{ellbound}) and $a\in (\frac{1}{2}, 1)$ in the last equality. Therefore, 
\[
\lim\limits_{x\to\infty} \frac{\ell(x\ell^{\frac{1}{\beta}}(x))}{\ell(x)}=e^{\lim\limits_{x\to\infty} \int^{x\ell^{\frac{1}{\beta}}(x)}_x \frac{\eta(t)}{t}dt}=1
\]
and thus (\ref{ellchar}) with $a\in (\frac{1}{2}, 1)$ is a sufficient condition for (\ref{ell}).  Moreover, for $\ell(x)=\sigma e^{\int^x_1 \frac{\eta(t)}{t} dt}$, it is easy to see that
$\eta(x)=x \frac{d}{dx} (\ln \ell(x))$. Hence, (\ref{ellchar}) could be replaced by $|\frac{d}{dt} (\ln \ell(t))|\leq \frac{c_1}{t(\ln t)^a}$.

If $\eta(t)=(\ln t)^{-a}$ for some $a\in (0,\frac{1}{2}]$ and all $t>1$, then
\begin{align} \label{counterexample}
\ell(x)=\sigma e^{\int^x_1 \frac{\eta(t)}{t} dt}=\sigma e^{\frac{1}{1-a}(\ln x)^{1-a}}
\end{align}
and
\begin{align*}
\lim_{x\to\infty} \int^{x\ell^{\frac{1}{\beta}}(x)}_x \frac{\eta(t)}{t}dt
&=\frac{1}{1-a} \lim_{x\to\infty}  \Big[\big(\ln x+\frac{\ln\sigma}{\beta}+\frac{1}{\beta(1-a)}(\ln x)^{1-a}\big)^{1-a}-(\ln x)^{1-a}\Big]\\
&=\frac{1}{1-a} \lim_{x\to\infty} (\ln x)^{1-a} \Big[\big(1+\frac{\ln\sigma}{\beta\ln x}+\frac{1}{\beta(1-a)} (\ln x)^{-a}\big)^{1-a}-1\Big]\\
&=\left\{
        \begin{array}{ll}
             \frac{2}{\beta} & \quad \text{for }\;\; a=\frac{1}{2},\\ \\
            \infty & \quad \text{for}\;\; a\in(0,\frac{1}{2}),
        \end{array}
    \right.
\end{align*}
where we use mean value theorem and $a\in (0,\frac{1}{2}]$ in the last equality. Therefore, for $a\in (0,\frac{1}{2}]$, (\ref{counterexample}) gives counterexamples to $(\ref{ell})$.

(iii) By Proposition 2.6 in \cite{R}, it is easy to see that $\left(\frac{(\ell^{\leftarrow}_{\beta})^{\alpha}}{h\circ (\ell^{\leftarrow}_{\beta})}\right)^{\leftarrow}(x)$ is a regularly varying function at $\infty$ with index $\frac{1}{\alpha\beta}$.

\end{remark}

We next compare our results with those in \cite{KS, S, Honda}. Roughly speaking, Theorem \ref{thm1} extends the result in \cite{KS} by allowing the innovations to be in the general domain of attraction of $\alpha$-stable law and the coefficients of the underlying linear process to possess the slowly varying function $\ell$. It also improves the weak convergence for random variables in \cite{KS} to weak convergence in the space $D[0,1]$ endowed with the Skorohod topology. Theorem \ref{thm2} generalizes both results in \cite{S} and \cite{Honda} by including the case $\alpha=1$ and allowing the innovations to be in the general domain of attraction of $\alpha$-stable law and coefficients to possess the slowly varying function $\ell$. Theorem \ref{thm3} is an improvement of Theorem \ref{thm1} in the case $\int_{\mathbb{R}}K(x)df(x)=0$ and answers the open question mentioned in Remark 2.2 of \cite{KS}. Moreover, our methodology is different from those in \cite{KS, S, Honda}. 

It is interesting to note that Theorem \ref{thm3} is valid for the case $\beta=1$ which is not included in Theorem \ref{thm1}. We believe that Theorem \ref{thm1} is also true for the case $\beta=1$ with a proper modification in the normalizing factor.  Moreover, our results show that the asymptotic behavior of the partial sum process $\{S_{[Nt]}: t\geq 0\}$ belong to two classes. One is for the case $\beta\in (\frac{1}{\alpha}, 1]$ and the other is for the case $\beta\in (1,\frac{2}{\alpha})$. In particular, if $\int_{\mathbb{R}}K(x)df(x)=0$ in the case $\beta\in (\frac{1}{\alpha}, 1]$, then the asymptotic behavior of the partial sum process $\{S_{[Nt]}: t\geq 0\}$ switches immediately to the one in the case $\beta\in (1,\frac{2}{\alpha})$. Recall the corresponding normalizing factors in these two cases. We see that this kind of switch in the normalizing factors is quite different from the one between the usual first and second order limit laws.

To obtain our results,  a new methodology is developed to study the asymptotic behavior of the partial sum process $\{S_{[Nt]}: t\geq 0\}$ defined in (\ref{ps}). According to the proofs of Theorems \ref{thm1}, \ref{thm2} and \ref{thm3},  the convergence of finite-dimensional distributions and the tightness in the space $D([0,1])$ rely heavily on the asymptotic behavior of the partial sum $S_{[Nt]}$ for each $t>0$.  Note that the normalizing factors in these theorems are regularly varying functions at infinity.  So we only need to consider the asymptotic behavior of the partial sum $S_N$ as $N$ tends to infinity.  Our methodology is based on Fourier transform, orthogonal projection in $L^2(\Omega,\mathcal{F},\mathbb{P})$ and von Bahr-Esseen inequality in \cite{vBE}.  It helps us easily find the main ingredient in the partial sum $S_N$ which will contribute to the limiting distribution, see Proposition \ref{prop}, Lemmas \ref{lemrd}, \ref{lm51} and \ref{lm53} below. It could probably be applied to the study of limit theorems for functionals of general linear processes with heavy-tailed innovations.

Throughout this paper, if not mentioned otherwise, the letter $c$ with or without a subscript, denotes a generic positive finite constant whose exact value is independent of $n$ and may change from line to line. We use $\iota$ to denote the imaginary unit $\sqrt{-1}$. For a complex number $z$, we use $\overline{z}$ and $|z|$ to denote its conjugate and modulus, respectively. For any integrable function $g(x)$, its Fourier transform is defined as $\widehat{g}(u)=\int_{\mathbb{R}}e^{\iota x u}g(x)\mathrm{d}x$. Moreover, we let $\phi(\lambda)$ be the characteristic function of linear process $X=\{X_n:\, n\in\mathbb{N}\}$ and $\phi_{\varepsilon}(\lambda)$ the characteristic function of innovations. That is, $\phi(\lambda)=\mathbb{E}[e^{\iota \lambda X_n}]$ and $\phi_{\varepsilon}(\lambda)=\mathbb{E}[e^{\iota \lambda \varepsilon_{1}}]$.  Finally, $[x]$ denotes the integer part of $x\geq 0$. 

The paper has the following structure. After some preliminaries in Section 2, Section 3 is devoted to the proofs of Theorems \ref{thm1}, \ref{thm2} and \ref{thm3} based on the Fourier transform, orthogonal projection and von Bahr-Esseen inequality.

\section{Preliminaries}

Let $Z$ be a real-valued $\alpha$-stable random variable. Then the characteristic function of $Z$ has the following expression
\[
\mathbb{E}[e^{\iota \lambda Z}]=\exp\Big(\iota \lambda \mu -\sigma^{\alpha}|\lambda|^{\alpha}(1-\iota \eta \sgn(\lambda) \omega(\lambda,\alpha))\Big),
\]
where $0<\alpha \leq 2,\sigma>0,-1 \leq \eta \leq 1,\mu \in \mathbb{R}$, and
\begin{equation*}\label{def-omega}
\omega(\lambda,\alpha)= \begin{cases} \tan(\frac{\pi \alpha}{2})
	 & \text { for } \alpha \neq 1 \\ \\
	 2 \pi^{-1} \ln(|\lambda|)
	 & \text { for } \alpha=1.\end{cases}
\end{equation*}
It is called symmetric if $\eta=\mu=0$ and standard if $\sigma=1$. For more details on stable laws, we refer to \cite{ST}.  A real-valued random variable $\varepsilon$ is said to be in the domain of attraction of an $\alpha$-stable law if there exist i.i.d. random variables $\varepsilon_n$ with the same distribution as $\varepsilon$, real numbers $A_n$, and strictly positive numbers $B_n$ such that 
\[
B^{-1}_n\Big(\sum\limits^n_{i=1}\varepsilon_i-A_n\Big)\xrightarrow{\mathcal{L}} Z
\] 
as $n$ tends infinity, where ``$\xrightarrow{\mathcal{L}}$'' denotes the convergence in distribution (see, e.g., \cite{IL}). The next lemma gives some estimates for the characteristic function of random variables in the domain of attraction of an $\alpha$-stable law. It will play an important role in the proofs of our main results.

% Since the innovations $\varepsilon_i$  belong to the domain of attraction of an $\alpha$-stable law, by Theorem 2.6.5 in \cite{IL}, the characteristic function $\phi_{\varepsilon}$ of $\varepsilon_1$ satisfies 
%\[
%|\phi_{\varepsilon}(\lambda)|=e^{-c_{\alpha} |\lambda|^{\alpha} L(\lambda)(1+o(1))}
%\]
%as $\lambda\to 0$, where $c_{\alpha}$ is a positive constant depending on $\alpha$ and $L(\lambda)$ is a slowly varying function as $\lambda\to 0$.  

\begin{lemma}  \label{lemb}
Suppose that $\varepsilon$ is in the domain of attraction of an $\alpha$-stable law with $\alpha\in (0,2)$, $\mathbb{E}\, \varepsilon=0$ for $\alpha\in (1,2)$, and $\phi_{\varepsilon}(\lambda)$ is the characteristic function of $\varepsilon$. Then, for any $\alpha'\in (0,\alpha)$, there exist positive constants $c_{\alpha',1}$ and $c_{\alpha',2}$ such that
\begin{align} \label{ineq1}
1-|\phi_{\varepsilon}(\lambda)|^2=\E\big|e^{ \iota \lambda\varepsilon}-\phi_{\varepsilon}(\lambda)\big|^2\leq c_{\alpha',1}\left(|\lambda|^{\alpha'}\wedge 1\right)
\end{align}
and 
\begin{align} \label{ineq2}
|1-\phi_{\varepsilon}(\lambda)|\leq c_{\alpha',2}\left(|\lambda|^{\alpha'}\wedge 1\right).
\end{align}
\end{lemma}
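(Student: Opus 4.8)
The plan is to reduce both inequalities to classical facts about characteristic functions of random variables in the domain of attraction of a stable law, together with the elementary inequality $|e^{\iota\lambda\varepsilon}-\phi_\varepsilon(\lambda)|\le 2$. The trivial bound by the constant $1$ (respectively a constant) is immediate since $|\phi_\varepsilon|\le 1$ and $1-|\phi_\varepsilon(\lambda)|^2\le 1$, and $|1-\phi_\varepsilon(\lambda)|\le 2$; hence it suffices to obtain the bounds $c_{\alpha',1}|\lambda|^{\alpha'}$ and $c_{\alpha',2}|\lambda|^{\alpha'}$ for $|\lambda|\le 1$, and then absorb the two regimes into the single expression $|\lambda|^{\alpha'}\wedge 1$ by adjusting constants. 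The identity $1-|\phi_\varepsilon(\lambda)|^2=\E|e^{\iota\lambda\varepsilon}-\phi_\varepsilon(\lambda)|^2$ is just the computation of a variance: writing $Y=e^{\iota\lambda\varepsilon}$ we have $\E|Y-\E Y|^2=\E|Y|^2-|\E Y|^2=1-|\phi_\varepsilon(\lambda)|^2$.

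For the quantitative bound near the origin I would use the standard Lévy/Khintchine-type expansion of the characteristic function of a law in the domain of attraction of an $\alpha$-stable law. Using the tail representation \eref{h}, write $1-\real\phi_\varepsilon(\lambda)=\int_{\R}(1-\cos(\lambda x))\,\P(\varepsilon_1\in dx)$ and split the integral at $|x|=|\lambda|^{-1}$. On $|x|\le|\lambda|^{-1}$ use $1-\cos(\lambda x)\le \tfrac12\lambda^2 x^2$ and the truncated second moment estimate $\int_{|x|\le R}x^2\,\P(\varepsilon_1\in dx)=O(R^{2-\alpha}h(R))$ (a consequence of \eref{h} and Karamata's theorem); on $|x|>|\lambda|^{-1}$ use $1-\cos(\lambda x)\le 2$ and the tail bound $\P(|\varepsilon_1|>R)=O(R^{-\alpha}h(R))$. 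Both pieces are $O(|\lambda|^\alpha h(1/|\lambda|))$, and since $h$ is slowly varying, $|\lambda|^\alpha h(1/|\lambda|)\le c_{\alpha',1}|\lambda|^{\alpha'}$ for every $\alpha'<\alpha$ and all $|\lambda|\le 1$ by Potter's bound. This gives $1-|\phi_\varepsilon(\lambda)|^2\le 2(1-\real\phi_\varepsilon(\lambda))\le c_{\alpha',1}|\lambda|^{\alpha'}$, proving \eref{ineq1}. For \eref{ineq2}, note $|1-\phi_\varepsilon(\lambda)|^2=(1-\real\phi_\varepsilon(\lambda))^2+(\imag\phi_\varepsilon(\lambda))^2$; the real part is controlled as above, and for the imaginary part write $\imag\phi_\varepsilon(\lambda)=\int_{\R}\sin(\lambda x)\,\P(\varepsilon_1\in dx)$. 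When $\alpha\in(1,2)$ the centering $\E\varepsilon=0$ lets us replace $\sin(\lambda x)$ by $\sin(\lambda x)-\lambda x$, which is $O(|\lambda x|\wedge|\lambda x|^2)$, and the same two-region splitting yields an $O(|\lambda|^{\alpha'})$ bound for $|\lambda|\le 1$; when $\alpha\le 1$ one does not center, but then $\alpha'<\alpha\le 1$ and the crude bound $|\sin(\lambda x)|\le |\lambda x|^{\alpha'}$ combined with $\E|\varepsilon|^{\alpha'}<\infty$ (finite because $\alpha'<\alpha$ and \eref{h}) gives $|\imag\phi_\varepsilon(\lambda)|\le c|\lambda|^{\alpha'}$ directly. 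Combining, $|1-\phi_\varepsilon(\lambda)|\le c_{\alpha',2}(|\lambda|^{\alpha'}\wedge 1)$.

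Alternatively, and perhaps more cleanly, one can invoke a known lemma: for $\varepsilon$ in the domain of attraction of an $\alpha$-stable law one has $\E|\varepsilon|^{\alpha'}<\infty$ for all $\alpha'\in(0,\alpha)$ (in the non-centered case this is all that is needed), whence $|1-\phi_\varepsilon(\lambda)|\le \E|e^{\iota\lambda\varepsilon}-1|\le \E(|\lambda\varepsilon|^{\alpha'}\wedge 2)\le 2^{1-\alpha'}|\lambda|^{\alpha'}\E|\varepsilon|^{\alpha'}$ using $|e^{\iota y}-1|\le |y|^{\alpha'}\cdot 2^{1-\alpha'}$ for the relevant range, with the same estimate applied to $\E|e^{\iota\lambda\varepsilon}-\phi_\varepsilon(\lambda)|^2\le \E|e^{\iota\lambda\varepsilon}-1|^2+|1-\phi_\varepsilon(\lambda)|^2$ after handling $\E|e^{\iota\lambda\varepsilon}-1|^2=2(1-\real\phi_\varepsilon(\lambda))$; the only place where the centering hypothesis is genuinely used is $\alpha\in(1,2)$, where $\alpha'$ can exceed $1$ and a naive moment bound on $|\varepsilon|^{\alpha'}$ with $\alpha'>1$ would not suffice without subtracting the mean from the $\sin$ term.

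The main obstacle I anticipate is purely bookkeeping: making the truncation estimates uniform in $\lambda$ and converting the slowly varying factor $h(1/|\lambda|)$ into a clean power $|\lambda|^{\alpha'}$ via Potter's bound, and correctly treating the boundary regimes $\alpha\le 1$ versus $\alpha\in(1,2)$ (where the centering assumption is indispensable for the imaginary part). There is no conceptual difficulty; everything follows from \eref{h}, Karamata's theorem, and elementary trigonometric inequalities.
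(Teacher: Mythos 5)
Your proposal is correct, but your main line of argument is not the one the paper takes. You prove the bounds directly from the tail representation (\ref{h}): split the integrals defining $1-\real\phi_{\varepsilon}(\lambda)$ and $\imag\phi_{\varepsilon}(\lambda)$ at $|x|=1/|\lambda|$, control the truncated moments by Karamata's theorem, use the centering to replace $\sin(\lambda x)$ by $\sin(\lambda x)-\lambda x$ when $\alpha\in(1,2)$, and then absorb the slowly varying factor $h(1/|\lambda|)$ into $|\lambda|^{\alpha'}$ via Potter's bound; inequality (\ref{ineq1}) then follows from $1-|\phi_{\varepsilon}(\lambda)|^2\le 2(1-\real\phi_{\varepsilon}(\lambda))$. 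The paper instead reduces (\ref{ineq1}) to (\ref{ineq2}) via $1-|\phi_{\varepsilon}(\lambda)|^2\le 2|1-\phi_{\varepsilon}(\lambda)|$ and disposes of (\ref{ineq2}) in two lines by citing known results: for $\alpha\in(0,1]$ the elementary bound $|1-\phi_{\varepsilon}(\lambda)|\le c\,(|\lambda|^{\alpha'}\E|\varepsilon|^{\alpha'})\wedge 1$ together with the finiteness of fractional moments $\E|\varepsilon|^{\alpha'}<\infty$ (Theorem 2.6.4 in Ibragimov--Linnik), and for $\alpha\in(1,2)$ that same moment theorem combined with Lemma 5 of von Bahr--Esseen, which is precisely the inequality $|\E e^{\iota\lambda\varepsilon}-1|\le c\,|\lambda|^{\alpha'}\E|\varepsilon|^{\alpha'}$ for centered variables and $1<\alpha'<2$ --- exactly the step where, as you correctly note, the centering is indispensable because $\alpha'$ may exceed $1$. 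In fact your ``alternative, cleaner'' second route is essentially the paper's proof, with von Bahr--Esseen playing the role of your centered-$\sin$ estimate. What the two approaches buy: the paper's is shorter and delegates the analytic work to cited lemmas, while yours is self-contained from (\ref{h}) and yields the sharper intermediate estimate $O(|\lambda|^{\alpha}h(1/|\lambda|))$ before weakening it to $|\lambda|^{\alpha'}$; the price is the bookkeeping you yourself flag (uniformity of the truncation estimates and the Potter step, plus the trivial-bound regime $|\lambda|$ bounded away from $0$, which you handle correctly by the $\wedge\,1$ term).
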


\begin{proof}  Note that $1-|\phi_{\varepsilon}(\lambda)|^2\leq 2(1-|\phi_{\varepsilon}(\lambda)|)\leq 2|1-\phi_{\varepsilon}(\lambda)|$.  So it suffices to show the inequality (\ref{ineq2}).  When $\alpha\in (0,1]$,  the inequality (\ref{ineq2}) follows from
\begin{align*}
|1-\phi_{\varepsilon}(\lambda)|=|\mathbb{E}(1-\cos(\lambda \varepsilon))-\iota \mathbb{E} \sin(\lambda \varepsilon) |
&\leq c_1\, ( |\lambda|^{\alpha'} \mathbb{E} |\varepsilon|^{\alpha'})\wedge 1\leq c_2 (|\lambda|^{\alpha'}\wedge 1),
\end{align*}
where in the last inequality we use Theorem 2.6.4 in \cite{IL}. When $\alpha\in (1,2)$, the inequality (\ref{ineq2}) follows from Theorem 2.6.4 in \cite{IL} and Lemma 5 in \cite{vBE}.
\end{proof}

\section{Proofs of main results}

In this section, we first find the main ingredient in the partial sum 
\[
S_N=\sum^{N}_{n=1}\big[K(X_n)-\mathbb{E}K(X_n)\big]
\]
which will contribute to the limiting distribution in the general case, see Proposition \ref{prop} below.  

For each $j\in\mathbb{Z}$, let $\mathcal{F}_{j}$ be the $\sigma$-field generated by $\{\varepsilon_i: i\leq j\}$. By assumptions ({\bf A1}) and ({\bf A2}), $K(X_n)\in L^2(\Omega,\mathcal{F},\mathbb{P})$ for each $n\in\mathbb{N}$.  Hence, by the orthogonal projection in $L^2(\Omega,\mathcal{F},\mathbb{P})$ or martingale convergence theorem, 
\begin{align*}
S_{N}=\sum^N_{n=1} \sum^{\infty}_{j=1}\Big( \mathbb{E}\big[K(X_n)|\mathcal{F}_{n-j}\big]-\mathbb{E}\big[K(X_n)|\mathcal{F}_{n-j-1}\big]\Big).
\end{align*}
Define
\begin{align} \label{tn1}
T_{N}=\sum^N_{n=1} \sum^{\infty}_{j=1}\Big( \mathbb{E}\big[K(X_n+a_j\widetilde{\varepsilon}_{n-j})|\varepsilon_{n-j}\big]-\mathbb{E}\big[K(X_n+a_j\widetilde{\varepsilon}_{n-j})\big]\Big),
\end{align}
where $\{\widetilde{\varepsilon}_i: k\in\mathbb{Z}\}$ is an independent copy of $\{\varepsilon_i: k\in\mathbb{Z}\}$. It is easy to see that the terms in the infinite series 
\[
\sum^{\infty}_{j=1}\Big( \mathbb{E}\big[K(X_n+a_j\widetilde{\varepsilon}_{n-j})|\varepsilon_{n-j}\big]-\mathbb{E}\big[K(X_n+a_j\widetilde{\varepsilon}_{n-j})\big]\Big)
\]
are independent. Note that 
\begin{align*}
 \mathbb{E}\big[K(X_n+a_j\widetilde{\varepsilon}_{n-j})|\varepsilon_{n-j}\big]
 &=\int_{\mathbb{R}} K(x+a_j\varepsilon_{n-j})f(x)dx=\frac{1}{2\pi}\int_{\mathbb{R}} \widehat{K}(u)\phi(-u)e^{-\iota ua_{j}\varepsilon_{n-j}}du,
\end{align*}
where we use Plancherel formula in the last equality. Therefore,
\begin{align*}
&\mathbb{E} \Big|\mathbb{E}\big[K(X_n+a_j\widetilde{\varepsilon}_{n-j})|\varepsilon_{n-j}\big]-\mathbb{E}\big[K(X_n+a_j\widetilde{\varepsilon}_{n-j})\big]\Big|^2\\
&=\frac{1}{4\pi^2} \mathbb{E}\Big|\int_{\mathbb{R}} \widehat{K}(u)\phi(-u)(e^{-\iota ua_{j}\varepsilon_{n-j}}-\phi_{\varepsilon}(-u a_{j}) du \Big|^2\\
&=\frac{1}{4\pi^2} \int_{\mathbb{R}^2} \widehat{K}(u)\widehat{K}(v) \phi(-u) \phi(-v)\mathbb{E}\big[(e^{-\iota ua_{j}\varepsilon_{n-j}}-\phi_{\varepsilon}(-u a_{j})) (e^{-\iota va_{j}\varepsilon_{n-j}}-\phi_{\varepsilon}(-v a_{j}))\big] du\, dv,
\end{align*}
where we use Fubini theorem in the last equality. 

By Cauchy-Schwartz inequality and Lemma \ref{lemb}, we could easily get that 
\[
\mathbb{E} \Big|\mathbb{E}\big[K(X_n+a_j\widetilde{\varepsilon}_{n-j})|\varepsilon_{n-j}\big]-\mathbb{E}\big[K(X_n+a_j\widetilde{\varepsilon}_{n-j})\big]\Big|^2
\]
is less than a constant multiple of $|a_j|^{\alpha'}$ for any $\alpha'\in(0,\alpha)$. Choosing $\alpha'$ close enough to $\alpha$ gives that the infinite series 
\[
\sum^{\infty}_{j=1}\Big( \mathbb{E}\big[K(X_n+a_j\widetilde{\varepsilon}_{n-j})|\varepsilon_{n-j}\big]-\mathbb{E}\big[K(X_n+a_j\widetilde{\varepsilon}_{n-j})\big]\Big)
\]
converges in $L^2$. Recall that the terms in the infinite series are independent. The convergence is also almost surely by L\'{e}vy's equivalence theorem (see, e.g., \cite{dudley}).

Now we obtain the following proposition which gives the main ingredient in the partial sum $S_N$.
\begin{proposition} \label{prop} Suppose that assumptions ({\bf A1}) and ({\bf A2}) hold. If $1<\alpha\beta\leq \frac{3}{2}$, then, for any $\alpha'\in (0,\alpha)$ and $\beta'\in (0,\beta)$ with $\alpha'\beta'>1$, there exists a positive constant $c_{\alpha', \beta{'},1}$ such that
\begin{align*}
\E|S_{N}-T_{N}|^2\leq c_{\alpha', \beta',1}\, N^{4-2\alpha'\beta'}.
\end{align*}
If $\alpha\beta>\frac{3}{2}$, then, for any $\alpha'\in (0,\alpha)$ and $\beta'\in (0,\beta)$ with $\alpha'\beta'>\frac{3}{2}$, there exists a positive constant $c_{\alpha',\beta', 2}$ such that
\begin{align*}
\E|S_{N}-T_{N}|^2\leq c_{\alpha',\beta',2}\,  N.
\end{align*}
\end{proposition}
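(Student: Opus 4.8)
\noindent The plan is to combine the Fourier/orthogonal‑projection set‑up introduced just above the proposition with a reorganization of
\[
S_N-T_N=\sum_{n=1}^{N}\sum_{j\ge1}\big(\Delta_{n,j}-U_{n,j}\big),
\]
where $\Delta_{n,j}=\E[K(X_n)\mid\mathcal F_{n-j}]-\E[K(X_n)\mid\mathcal F_{n-j-1}]$ and $U_{n,j}=\E[K(X_n+a_j\widetilde\varepsilon_{n-j})\mid\varepsilon_{n-j}]-\E[K(X_n+a_j\widetilde\varepsilon_{n-j})]=K_\infty(a_j\varepsilon_{n-j})-\E K_\infty(a_j\varepsilon_{n-j})$, a centred function of $\varepsilon_{n-j}$ alone. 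The core is a sharp $L^2$ bound on the increments: for every $\alpha'\in(0,\alpha)$, $\beta'\in(0,\beta)$ with $\alpha'\beta'>1$, and all $n,j\ge1$,
\[
\big\|\Delta_{n,j}-U_{n,j}\big\|_{2}\ \le\ c\,j^{\frac12-\alpha'\beta'} .
\]
This is strictly better than the obvious bound $\|\Delta_{n,j}\|_2+\|U_{n,j}\|_2\le c\,|a_j|^{\alpha'/2}\le c\,j^{-\alpha'\beta'/2}$ precisely because $\alpha'\beta'>1$; that gain is the whole reason $T_N$ approximates $S_N$.

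To prove the increment bound, Fourier inversion for $K$, Plancherel, and the conditional characteristic function of $X_n$ give
\[
\Delta_{n,j}-U_{n,j}=\frac1{2\pi}\int_{\mathbb R}\widehat K(u)\,\Pi_{<j}(u)\Big[e^{-\iota u R_{n,j}}-\phi_\varepsilon(-ua_j)\,\Pi_{>j}(u)\Big]\big(e^{-\iota u a_j\varepsilon_{n-j}}-\phi_\varepsilon(-ua_j)\big)\,\mathrm{d}u ,
\]
with $\Pi_{<j}(u)=\prod_{i=1}^{j-1}\phi_\varepsilon(-ua_i)$, $\Pi_{>j}(u)=\prod_{i>j}\phi_\varepsilon(-ua_i)$ and $R_{n,j}=\sum_{i>j}a_i\varepsilon_{n-i}$. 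Writing $\|\cdot\|_2^2$ as a double $(u,v)$–integral and using that $\varepsilon_{n-j}$ is independent of $R_{n,j}$, the integrand factors into two expectations: one is bounded by $c\,(|ua_j|^{\alpha'}\wedge1)^{1/2}(|va_j|^{\alpha'}\wedge1)^{1/2}$ via \eqref{ineq1}; for the other, the splitting $e^{-\iota uR_{n,j}}-\phi_\varepsilon(-ua_j)\Pi_{>j}(u)=\big(e^{-\iota uR_{n,j}}-\Pi_{>j}(u)\big)+\big(1-\phi_\varepsilon(-ua_j)\big)\Pi_{>j}(u)$ kills the cross terms (the first summand is centred) and leaves a covariance term bounded by $\sqrt{\rho_j(u)\rho_j(v)}$, where $\rho_j(u):=\sum_{i>j}(|ua_i|^{\alpha'}\wedge1)$ (use $1-\prod x_i\le\sum(1-x_i)$ and \eqref{ineq1}), plus a deterministic term bounded by $c(|ua_j|^{\alpha'}\wedge1)(|va_j|^{\alpha'}\wedge1)$ (use \eqref{ineq2}). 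The double integral then separates into two products of one–dimensional integrals; bounding $\rho_j(u)\le|u|^{\alpha'}\sum_{i>j}|a_i|^{\alpha'}$, using the uniform decay $|\Pi_{<j}(u)|\le c(1+|u|)^{-4}$ for all large $j$ (a consequence of assumption (\textbf{A2}); the finitely many small $j$ are covered by the trivial bound $\|\Delta_{n,j}-U_{n,j}\|_2\le c$ from (\textbf{A1})–(\textbf{A2})), and using the Potter bound $|a_i|\le c\,i^{-\beta'}$ together with $\sum_{i>j}i^{-\alpha'\beta'}\le c\,j^{1-\alpha'\beta'}$ (here $\alpha'\beta'>1$ enters), one obtains $\|\Delta_{n,j}-U_{n,j}\|_2^2\le c\,(j^{1-2\alpha'\beta'}+j^{-3\alpha'\beta'})\le c\,j^{1-2\alpha'\beta'}$.

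The next step reorganizes the double sum by the innovation index: put $k=n-j$ and $D_k:=\sum_{n=\max(1,k+1)}^{N}\big(\Delta_{n,n-k}-U_{n,n-k}\big)$, so that $S_N-T_N=\sum_{k\le N-1}D_k$. Each $D_k$ is $\mathcal F_k$–measurable and $\E[D_k\mid\mathcal F_{k-1}]=0$ (both $\Delta_{n,n-k}$ and $U_{n,n-k}$ have this property), hence the $D_k$ are orthogonal, the series converges in $L^2$, and $\E|S_N-T_N|^2=\sum_{k\le N-1}\E|D_k|^2$. By the triangle inequality, stationarity and the increment bound, $\E|D_k|^2\le c\big(\sum_{j=\max(1,1-k)}^{N-k}j^{\frac12-\alpha'\beta'}\big)^2$. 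It remains to sum over $k$, splitting $k\ge1$ from $k\le0$ (in the latter range the inner sum has exactly $N$ terms and starts at $1-k$). If $1<\alpha\beta\le\frac32$ then $\alpha'\beta'<\frac32$, so $\tfrac12-\alpha'\beta'\in(-1,-\tfrac12)$, the inner sum for $k\ge1$ is $O\big((N-k)^{\frac32-\alpha'\beta'}\big)$, and $\sum_{k\ge1}\E|D_k|^2\le c\sum_{l=1}^{N}l^{3-2\alpha'\beta'}\le cN^{4-2\alpha'\beta'}$; for $k\le0$ one combines $\sum_{j=1-k}^{N-k}j^{\frac12-\alpha'\beta'}\le cN(1-k)^{\frac12-\alpha'\beta'}$ with $\le c(N-k)^{\frac32-\alpha'\beta'}$ to again get $cN^{4-2\alpha'\beta'}$ (convergence of $\sum_{k\le0}$ uses $\alpha'\beta'>1$). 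If $\alpha\beta>\frac32$, choose $\alpha'\beta'>\frac32$, so $\sum_{j\ge1}j^{\frac12-\alpha'\beta'}<\infty$; then $\E|D_k|^2\le c$ for $k\ge1$ (total $O(N)$), and for $k\le0$ the tail estimate $\sum_{j\ge1-k}j^{\frac12-\alpha'\beta'}\le c(1-k)^{\frac32-\alpha'\beta'}$ together with $\le cN(1-k)^{\frac12-\alpha'\beta'}$ yields $\sum_{k\le0}\E|D_k|^2\le cN$. This gives the two asserted bounds.

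The main obstacle is the increment estimate $\|\Delta_{n,j}-U_{n,j}\|_2\le c\,j^{1/2-\alpha'\beta'}$: one must extract, from the Fourier representation, the cancellation between the martingale increment $\Delta_{n,j}$ and its single‑innovation surrogate $U_{n,j}$ — which is exactly the factor $\rho_j(u)=\sum_{i>j}(|ua_i|^{\alpha'}\wedge1)$ coming from the remote–past block $R_{n,j}$ — while keeping every $u$–integral bounded uniformly in $j$ via (\textbf{A2}) and every coefficient sum controlled via Potter's bound. A lesser but genuine technical point is the bookkeeping for $k\le0$ in the final summation: a bare triangle inequality over all of $k$ would give only $O(N^2)$, so one must exploit both that $D_k$ contains only $N$ summands and that each is small as $k\to-\infty$ — which is again where the hypothesis $\alpha'\beta'>1$ is used.
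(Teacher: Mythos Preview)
Your proof is correct and follows essentially the same approach as the paper's. Both arguments use the Fourier/Plancherel representation, extract the key factor $(e^{-\iota u a_j\varepsilon_{n-j}}-\phi_\varepsilon(-ua_j))$, bound the tail block $e^{-\iota uR_{n,j}}-\Pi_{>j}(u)$ via the variance estimate $\sum_{i>j}(|ua_i|^{\alpha'}\wedge1)$ from Lemma~\ref{lemb}, use (\textbf{A2}) to make the $u$--integrals converge, and exploit orthogonality over the innovation index $k=n-j$ to sum. The main organizational difference is that the paper inserts three intermediate quantities $S_{N,1},T_{N,1},T_{N,2}$ and handles the transitions in four steps (separating the ``small $j\le m_0$'' terms, then the remote--past block, then the extra factor $1-\phi_\varepsilon(-ua_j)$), whereas you package everything into a single increment bound $\|\Delta_{n,j}-U_{n,j}\|_2\le c\,j^{1/2-\alpha'\beta'}$ and then sum the orthogonal $D_k$'s directly; your splitting $e^{-\iota uR_{n,j}}-\phi_\varepsilon(-ua_j)\Pi_{>j}(u)=(e^{-\iota uR_{n,j}}-\Pi_{>j}(u))+(1-\phi_\varepsilon(-ua_j))\Pi_{>j}(u)$ is exactly the paper's Steps~2 and~3 done simultaneously. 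The final summations are also the same computation in different clothing: the paper's triple sum with the constraint $n-j=l-i$ and the subsequent Cauchy--Schwarz is precisely your $\sum_k(\sum_j j^{1/2-\alpha'\beta'})^2$.
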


\begin{proof}
The proof will be done in several steps.

\noindent
{\bf Step 1} By assumption ({\bf A2}), there exists $m_0\in\mathbb{N}$ such that $\prod^{m_0}_{i=1}|\phi_{\varepsilon}(a_iu)|$ is less than a constant multiple of $\frac{1}{1+|u|^4}$. Let 
\[
S_{N,1}=\sum^N_{n=1} \sum^{\infty}_{j=m_0+1}\Big( \mathbb{E}\big[K(X_n)|\mathcal{F}_{n-j}\big]-\mathbb{E}\big[K(X_n)|\mathcal{F}_{n-j-1}\big]\Big).
\]
Then we will show that $\mathbb{E}|S_N-S_{N,1}|^2$ is less than a constant multiple of $N$.  

Observe that
\begin{align*}
S_N-S_{N,1}=\sum^N_{n=1}\sum^{m_0}_{j=1}\left( \mathbb{E}\big[K(X_n)|\mathcal{F}_{n-j}\big]-\mathbb{E}\big[K(X_n)|\mathcal{F}_{n-j-1}\big]\right):=\sum^N_{n=1}\sum^{m_0}_{j=1}\mathcal{P}_{n,n-j}
\end{align*}
and $\mathbb{E}[\mathcal{P}_{n_1,n_1-j_1}\mathcal{P}_{n_2,n_2-j_2}]=0$ if $n_1-j_1\neq n_2-j_2$. 

Hence
\begin{align*}
\mathbb{E}|S_N-S_{N,1}|^2
&=\sum^N_{n=1}\sum^{m_0}_{j=1}\mathbb{E}[\mathcal{P}_{n,n-j}^2]+\sum^N_{n=1}\sum_{1\leq i\neq j\leq m_0}\mathbb{E}[\mathcal{P}_{n,n-j}\mathcal{P}_{n-j+i,n-j}]\\
&\leq \sum^N_{n=1}\mathbb{E}[K^2(X_n)]+\sum^N_{n=1}\sum_{1\leq i\neq j\leq m_0}\left(\mathbb{E}[\mathcal{P}^2_{n,n-j}]+\mathbb{E}[\mathcal{P}^2_{n-j+i,n-j}]\right)\\
&\leq c_1 N.
\end{align*}

\noindent
{\bf Step 2} We estimate $\E|S_{N,1}-T_{N,1}|^2$ where 
\[
T_{N,1}=\sum^N_{n=1} \sum^{\infty}_{j=m_0+1}\Big( \mathbb{E}\big[K(X_n)|\varepsilon_{n-j}\big]-\mathbb{E}\big[K(X_n)\big]\Big).
\]
By assumption ({\bf A1}), the choice of $m_0$ in {\bf Step 1} and Plancherel formula, it is easy to see that
\begin{align*}
S_{N,1}-T_{N,1}
&=\frac{1}{2\pi}\sum^N_{n=1}\sum^{\infty}_{j=m_0+1}\int_{\R} \widehat{K}(u)\prod^{j-1}_{k=1}\phi_{\varepsilon}(-a_k u)(e^{-\iota ua_j\varepsilon_{n-j}}-\phi_{\varepsilon}(-a_ju))(e^{-\iota u \widetilde{X}_{n, j}} -\mathbb{E} e^{-\iota u \widetilde{X}_{n, j}})du\\
&:=\frac{1}{2\pi}\sum^N_{n=1}\sum^{\infty}_{j=m_0+1} A_{n,j},
\end{align*}
where $\widetilde{X}_{n,j}=\sum\limits^{\infty}_{k=j+1} a_k\varepsilon_{n-k}$.

Observe that $\mathbb{E} [A_{n_1, j_1}A_{n_2, j_2}]=0$ if $n_1-j_1\neq n_2-j_2$. Hence, we could easily get 
\begin{align} \label{tn12}
\E|S_{N,1}-T_{N,1}|^2 \nonumber
&=\frac{1}{4\pi^2}\sum^N_{n=1}\sum^N_{l=1}\sum^{\infty}_{j=m_0+1}\int_{\R^2} \widehat{K}(u)\widehat{K}(v)\prod^{j-1}_{k=1}\phi_{\varepsilon}(-a_k u)\prod^{i-1}_{p=1}\phi_{\varepsilon}(-a_p v)1_{\{n-j=l-i, i\geq m_0+1\}}\\ \nonumber
&\qquad\qquad\times \E[(e^{-\iota ua_j\varepsilon_{n-j}}-\phi_{\varepsilon}(-a_ju))(e^{-\iota v a_i\varepsilon_{l-i}}-\phi_{\varepsilon}(-a_iv))] \\ 
&\qquad\qquad\qquad\times\E[(e^{-\iota u \widetilde{X}_{n, j}} -\mathbb{E} e^{-\iota u \widetilde{X}_{n, j}})(e^{-\iota v \widetilde{X}_{l, i}} -\mathbb{E} e^{-\iota u \widetilde{X}_{l, i}})] du\, dv.
\end{align}

By the orthogonal projection in $L^2(\Omega,\mathcal{F},\mathbb{P})$,
\begin{align*}
e^{-\iota u \widetilde{X}_{n, j}} -\mathbb{E} e^{-\iota u \widetilde{X}_{n, j}}
&=\sum^{\infty}_{k=j+1}\Big(\mathbb{E}[e^{-\iota u \widetilde{X}_{n, j}}|\mathcal{F}_{n-k}]-\mathbb{E}[e^{-\iota u \widetilde{X}_{n, j}}|\mathcal{F}_{n-k-1}]\Big)\\
&=\sum^{\infty}_{k=j+1} \Big(\prod^{k-1}_{l=j+1}\phi_{\varepsilon}(-ua_l)\Big)\Big(e^{-\iota ua_k\varepsilon_{n-k}}-\phi_{\varepsilon}(-a_k u)\Big)\prod^{\infty}_{l=k+1} e^{-\iota u a_l\varepsilon_{n-l}}
\end{align*}
with the convention $\prod\limits^{j}_{l=j+1}\phi_{\varepsilon}(-ua_l)=1$.

Hence, for any $\alpha'\in(0,\alpha)$, by Lemma \ref{lemb},
\begin{align} \label{xnj}
\E|e^{-\iota u \widetilde{X}_{n, j}} -\mathbb{E} e^{-\iota u \widetilde{X}_{n, j}}|^2\leq \sum^{\infty}_{k=j+1} \E|e^{-\iota ua_k\varepsilon_{n-k}}-\phi_{\varepsilon}(-a_k u)|^2\leq c_{\alpha', 1} |u|^{\alpha'} \sum^{\infty}_{k=j+1} |a_k|^{\alpha'}.
\end{align}
Now, by Cauchy-Schwartz inequality, Lemma \ref{lemb}, equality (\ref{tn12}), inequality (\ref{xnj}), and the choice of $m_0$ in {\bf Step 1}, 
\begin{align*}
\E|S_{N,1}-T_{N,1}|^2
&\leq c_2 \sum^N_{n=1}\sum^N_{l=1}\sum^{\infty}_{j=m_0+1}   1_{\{n-j=l-i, i\geq m_0+1\}} \Big(|a_j|^{\alpha'}\sum^{\infty}_{k=j+1} |a_k|^{\alpha'}\Big)^{\frac{1}{2}} \Big(|a_i|^{\alpha'}\sum^{\infty}_{p=i+1} |a_p|^{\alpha'}\Big)^{\frac{1}{2}}.
\end{align*}
Recall the expression of coefficients $a_j$. Then, for any $\beta'\in (0,\beta)$ with $\alpha'\beta'>1$, by Proposition 1(i) in \cite{A}, we can obtain
\begin{align*}
\E|S_{N,1}-T_{N,1}|^2
&\leq c_3 \sum^N_{n=1}\sum^N_{l=1}\sum^{\infty}_{j=m_0+1} 1_{\{n-j=l-i, i\geq m_0+1\}} j^{\frac{1}{2}-\alpha'\beta'} i^{\frac{1}{2}-\alpha'\beta'}\\
&\leq c_4\bigg[ \sum^N_{n=1}\sum^{\infty}_{j=m_0+1} j^{1-2\alpha'\beta'}+\sum^N_{n=1}\sum^N_{l=n+1}\sum^{\infty}_{j=m_0+1} j^{\frac{1}{2}-\alpha'\beta'}(l-n+j)^{\frac{1}{2}-\alpha'\beta'}\bigg]\\
&\leq c_5\bigg[ N+\sum^N_{n=1}\sum^N_{l=n+1} (l-n)^{2-2\alpha'\beta'}\int^{\infty}_{\frac{1}{l-n}} x^{\frac{1}{2}-\alpha'\beta'}(1+x)^{\frac{1}{2}-\alpha'\beta'} dx \bigg].
\end{align*}

By simple calculation, for $l=n+1,\cdots, N$,
\begin{align*}
\int^{\infty}_{\frac{1}{l-n}} x^{\frac{1}{2}-\alpha'\beta'}(1+x)^{\frac{1}{2}-\alpha'\beta'} dx
\leq c_6 \left\{
        \begin{array}{ll}
             1 & \quad \text{if}\;\; 1<\alpha\beta\leq \frac{3}{2} \; \text{and}\; \alpha'\beta'>1,\\ \\
             (l-n)^{\alpha'\beta'-\frac{3}{2}}  & \quad \text{if}\;\; \alpha'\beta'>\frac{3}{2}.
        \end{array}
    \right.
\end{align*}
Therefore,
\begin{align*}
\E|S_{N,1}-T_{N,1}|^2\leq c_7 \left\{
        \begin{array}{ll}
             N^{4-2\alpha'\beta'} & \quad \text{if}\;\; 1<\alpha\beta\leq \frac{3}{2}\; \text{and}\; \alpha'\beta'>1, \\  \\
             N  & \quad \text{if}\;\; \alpha'\beta'>\frac{3}{2}.
        \end{array}
    \right.
\end{align*}

\noindent
{\bf Step 3} Let 
\[
T_{N,2}=\sum^N_{n=1} \sum^{\infty}_{j=m_0+1}\Big( \mathbb{E}\big[K(X_n+a_j\widetilde{\varepsilon}_{n-j})|\varepsilon_{n-j}\big]-\mathbb{E}\big[K(X_n+a_j\widetilde{\varepsilon}_{n-j})\big]\Big).
\]
We will show that $\E |T_{N,1}-T_{N,2}|^2$ is less than a constant multiple of $N$.  

Note that
\begin{align*}
T_{N,1}-T_{N,2}
&=\frac{1}{2\pi}\sum^N_{n=1} \sum^{\infty}_{j=m_0+1} \int_{\mathbb{R}} \widehat{K}(u) \prod^{j-1}_{k=1}\phi_{\varepsilon}(-a_ku)(1-\phi_{\varepsilon}(-a_j u))(e^{-\iota u a_j \varepsilon_{n-j}}-\phi_{\varepsilon}(-a_j u)) \mathbb{E} e^{-\iota u \widetilde{X}_{n, j}}\, du.
\end{align*}
Recall the choice of $m_0$ in {\bf Step 1}. Then, for any $\alpha'\in(0,\alpha)$ and $\beta'\in (0,\beta)$ with $\alpha'\beta'>1$, by Lemma \ref{lemb} and Proposition 1(i) in \cite{A}, 
\begin{align*}
\E|T_{N,1}-T_{N,2}|^2
&\leq c_{8}\sum^N_{n=1}\sum^N_{l=1}\sum^{\infty}_{j=m_0+1} 1_{\{n-j=l-i, i\geq m_0+1\}} j^{-\frac{3\alpha'\beta'}{2}} i^{-\frac{3\alpha'\beta'}{2}}\\
&\leq c_{9}\bigg(\sum^N_{n=1}\sum^{\infty}_{j=m_0+1} j^{-3\alpha'\beta'}+\sum^N_{n=1}\sum^N_{l=n+1}\sum^{\infty}_{j=m_0+1} j^{-\frac{3\alpha'\beta'}{2}}(l-n+j)^{-\frac{3\alpha'\beta'}{2}}\bigg)\\
&\leq c_{10}\bigg(N+\sum^N_{n=1}\sum^N_{l=n+1} (l-n)^{1-3\alpha'\beta'}\int^{\infty}_{\frac{1}{l-n}} x^{-\frac{3\alpha'\beta'}{2}}(1+x)^{-\frac{3\alpha'\beta'}{2}} dx \bigg)\\
&\leq c_{11}\bigg(N+\sum^N_{n=1}\sum^N_{l=n+1} (l-n)^{-\frac{3\alpha'\beta'}{2}}\bigg)\\
&\leq c_{12}\, N.
\end{align*}

\noindent
{\bf Step 4} We will show that $\mathbb{E}|T_{N}-T_{N,2}|^2$ is less than a constant multiple of $N$. This follows easily from
\begin{align*}
T_{N}-T_{N,2}
&=\sum^N_{n=1} \sum^{m_0}_{j=1}\Big( \mathbb{E}\big[K(X_n+a_j\widetilde{\varepsilon}_{n-j})|\varepsilon_{n-j}\big]-\mathbb{E}\big[K(X_n+a_j\widetilde{\varepsilon}_{n-j})\big]\Big)
\end{align*}
and similar arguments as in {\bf Step 1}.

\noindent 
{\bf Step 5} Combining all estimates in {\bf Step 1}-{\bf Step 4} gives the desired result.
\end{proof}

\bigskip
 
Recall the normalizing factors in Theorems \ref{thm1}, \ref{thm2} and \ref{thm3}.  Choose $\alpha'$ and $\beta'$ close enough to $\alpha$ and $\beta$. Then, with the help of  Proposition 1(i) in \cite{A}, Proposition \ref{prop} implies that the asymptotic behavior of the partial sum $S_N$ is determined by that of $T_{N}$ given in (\ref{tn1}).

In the following subsections, we will start with $T_N$ and give the proofs of Theorems \ref{thm1}, \ref{thm2} and \ref{thm3}, respectively. In each subsection, we first further find the main ingredient in $T_N$ according to the region which $\alpha$ and $\beta$ belong to. Then we will obtain the limit theorem satisfied by the main ingredient and thus give the proof of the corresponding result. 

Recall that $K_{\infty}(x)=\mathbb{E}[K(X_1+x)]$ for each $x\in\mathbb{R}$.  $T_N$ can be rewritten as 
\begin{align} \label{tn2}
T_{N}=\sum^{N}_{n=1} \sum^{\infty}_{j=1} \big(K_{\infty}(a_j \varepsilon_{n-j})-\E K_{\infty}(a_j \varepsilon_{n-j})\big).
\end{align}

\subsection{Proof of Theorem \ref{thm1}}

In this subsection, we give the proof of Theorem \ref{thm1}. Let 
\[
U_N=\left(-\int_{\R} K(x)df(x)\right) \sum\limits^{N}_{n=1} X_n.
\] 
Recall the expression of $T_N$ in (\ref{tn2}) and note that 
\[
-\int_{\R} K(x)df(x)=\frac{1}{2\pi}\int_{\R}\widehat{K}(u) \phi(-u)(-\iota u) du.
\]
Hence
\begin{align*}
T_N-U_N=\frac{1}{2\pi}\sum^{N}_{n=1} \sum^{\infty}_{j=1}  \int_{\R} \widehat{K}(u) \phi(-u) (e^{-\iota u a_j \varepsilon_{n-j}}-\phi_{\varepsilon}(-a_j u)+\iota u a_j \varepsilon_{n-j}) \, du.
\end{align*}

Next we give the following estimates for $T_N-U_N$. 
\begin{lemma} \label{lemrd} Suppose that assumptions ({\bf A1})-({\bf A2}) hold. For any $r\in (1,\alpha\beta)$ with $\alpha\in (1,2)$ and $\beta\in (\frac{1}{\alpha},1]$, there is a positive constant $c_{r}>0$ such that
\[
\E |T_{N}-U_{N}|^{r} \leq c_{r}\, N.
\]
\end{lemma}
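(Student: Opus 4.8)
The plan is to estimate $\E|T_N - U_N|^r$ by exploiting the representation
\[
T_N - U_N = \frac{1}{2\pi}\sum_{n=1}^N \sum_{j=1}^\infty \int_\R \widehat K(u)\,\phi(-u)\,\big(e^{-\iota u a_j \varepsilon_{n-j}} - \phi_\varepsilon(-a_j u) + \iota u a_j \varepsilon_{n-j}\big)\,du,
\]
and reorganizing the double sum by the index $k = n-j$ of the innovation $\varepsilon_k$ that appears. For each fixed $k$, the collection of summands indexed by $(n,j)$ with $n-j = k$ defines a random variable $W_k$ which is a measurable function of $\varepsilon_k$ alone; moreover the $W_k$ are centered (the $+\iota u a_j \varepsilon_{n-j}$ correction is exactly $\E$-free since $\E\varepsilon_1 = 0$) and independent across $k$. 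Writing $T_N - U_N = \sum_k W_{k,N}$ as a sum of independent centered random variables, the natural tool is the von Bahr–Esseen inequality (from \cite{vBE}), which gives $\E|T_N - U_N|^r \leq 2\sum_k \E|W_{k,N}|^r$ for $1 \leq r \leq 2$. So the task reduces to bounding $\sum_k \E|W_{k,N}|^r$.

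The next step is to control $\E|W_{k,N}|^r$. First I would bound $\E|W_{k,N}|^r \leq \big(\E|W_{k,N}|^2\big)^{r/2}$ by Jensen (since $r < 2$), reducing matters to the $L^2$ norm of $W_{k,N}$, which by independence of the $\varepsilon$'s within the Fourier representation splits into a clean double integral over $\R^2$ against $\widehat K(u)\widehat K(v)\phi(-u)\phi(-v)$ times $\E\big[(e^{-\iota u a_j \varepsilon_k} - \phi_\varepsilon(-a_j u) + \iota u a_j \varepsilon_k)(\overline{\cdots})\big]$ summed over the relevant $(j,j')$. The key analytic input is a refinement of Lemma \ref{lemb}: since $\varepsilon_1$ is centered with $\alpha \in (1,2)$, one has $\E|e^{-\iota \lambda \varepsilon} - \phi_\varepsilon(\lambda) + \iota\lambda\varepsilon|^2 \leq c(|\lambda|^{\alpha'} \wedge |\lambda|^2)$ for $\alpha' \in (1,\alpha)$ — the extra subtraction of the first-order term buys an improved small-$\lambda$ estimate. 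Plugging $\lambda = a_j u$ with $a_j \sim j^{-\beta}\ell(j)$ and using that $\phi(-u)$ decays faster than any polynomial (so the $u$-integral converges), I expect $\E|W_{k,N}|^2$ to behave like a constant times $\big(\sum_{j \geq 1: \, n-j=k, \, 1\le n \le N} |a_j|^{\alpha'}\big)^2$-type quantity, i.e. controlled by the tail sums $\sum_{j > (k)_+} j^{-\alpha'\beta'}$ appropriately truncated to $n \leq N$.

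Then I would sum over $k$: for $k \in \{1-N,\dots,0\}$ (roughly $N$ values contributing a bounded amount each) and for $k \leq 0$ far from the window, the tail $\sum_{j} j^{-\alpha'\beta'}$ with $\alpha'\beta' > 1$ is summable and the resulting double series over $k$ converges, yielding the bound $c_r N$. Concretely, after raising to the power $r/2$ and summing, the exponent bookkeeping should mirror Steps 2–3 of Proposition \ref{prop}: one picks $\alpha'$ close to $\alpha$ and $\beta'$ close to $\beta$ so that $\alpha'\beta' > 1$ and also $r \cdot (\text{decay rate}) $ stays in the summable regime, using crucially that $r < \alpha\beta$. The main obstacle, and the place requiring the most care, is establishing the improved second-order estimate $\E|e^{-\iota\lambda\varepsilon} - \phi_\varepsilon(\lambda) + \iota\lambda\varepsilon|^2 \lesssim |\lambda|^{\alpha'}\wedge|\lambda|^2$ for centered $\varepsilon$ in the domain of attraction of an $\alpha$-stable law with $\alpha \in (1,2)$ — this goes slightly beyond Lemma \ref{lemb} as stated and relies on Theorem 2.6.4 in \cite{IL} together with a truncation argument separating the contributions of $\{|\varepsilon| \leq |\lambda|^{-1}\}$ (where a Taylor expansion to second order applies) and $\{|\varepsilon| > |\lambda|^{-1}\}$ (where the regularly varying tail is used). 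The rest is careful but routine exponent arithmetic of the kind already carried out in the proof of Proposition \ref{prop}.
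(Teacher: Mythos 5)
Your reorganization of $T_N-U_N$ by the index of the innovation and the application of the von Bahr--Esseen inequality match the paper's first step exactly. But the proof then breaks at the Jensen step $\E|W_{k,N}|^r\leq(\E|W_{k,N}|^2)^{r/2}$: the second moment is infinite. Indeed, integrating the Fourier kernel shows
\[
W_{k,N}=\sum_{n}\Big(K_{\infty}(a_{n-k}\varepsilon_k)-\E K_{\infty}(a_{n-k}\widetilde\varepsilon)\Big)
+\Big(\sum_{n}a_{n-k}\Big)\Big(\int_{\R}K(x)\,df(x)\Big)\varepsilon_k ,
\]
up to sign, so $W_{k,N}$ contains a term exactly linear in $\varepsilon_k$ with a coefficient that is nonzero in general (Lemma 3.2 does not assume $\int_{\R}K\,df=0$; that case is Theorem 1.3), and $\E\varepsilon_k^2=\infty$ since $\alpha<2$. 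For the same reason your proposed ``refined Lemma 2.1,'' namely $\E|e^{-\iota\lambda\varepsilon}-\phi_\varepsilon(\lambda)+\iota\lambda\varepsilon|^2\leq c(|\lambda|^{\alpha'}\wedge|\lambda|^2)$, is false for every $\lambda\neq0$: the integrand is bounded below by $|\lambda||\varepsilon|-2$, whose square is not integrable. No truncation argument can rescue a genuine $L^2$ bound for the untruncated quantity, and once you truncate you are no longer bounding $W_{k,N}$ itself, so the clean reduction to a double $u$-integral collapses. This is precisely why the whole lemma is stated for $r<\alpha\beta<2$ moments: one must never pass through $L^2$.

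The paper's route, after the same von Bahr--Esseen reduction, stays at the level of $r$-th moments and uses a \emph{pointwise} (almost sure) bound instead of a variance bound: choosing $\delta\in(\tfrac1\beta,\tfrac{\alpha}{r})$ (nonempty exactly because $r<\alpha\beta$), the elementary inequality $|e^{\iota x}-1-\iota x|\leq c_\delta|x|^{\delta}$ together with Lemma 2.1 gives
$|e^{-\iota u a_{n-j}\varepsilon_j}-\phi_\varepsilon(-a_{n-j}u)+\iota u a_{n-j}\varepsilon_j|\leq c\,|u|^{\delta}\big(|a_{n-j}\varepsilon_j|^{\delta}+|a_{n-j}|^{\delta}\big)$.
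Since $r\delta<\alpha$, Theorem 2.6.4 of Ibragimov--Linnik gives $\E|\varepsilon_1|^{r\delta}<\infty$, and the problem reduces to the deterministic sums $\sum_{j}\big|\sum_{n}|a_{n-j}|^{\delta}\big|^{r}$, which are handled by choosing $\beta'<\beta$ with $\beta'\delta>1$ and $1+r(1-\beta'\delta)\in(0,1)$, yielding the bound $cN$. If you replace your $L^2$/Jensen step by this pointwise $\delta$-bound and the $r\delta<\alpha$ moment condition, the rest of your outline (summation over $k\leq N-1$, including the infinite range $k\leq0$, and the exponent bookkeeping exploiting $r<\alpha\beta$) goes through as in the paper.
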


\begin{proof}
We first observe that 
\begin{align*}
T_N-U_N=\frac{1}{2\pi}\sum^{N-1}_{j=-\infty} \sum^{N-1}_{n=1\vee (j+1)}  \int_{\R} \widehat{K}(u) \phi(-u) (e^{-\iota u a_{n-j} \varepsilon_{j}}-\phi_{\varepsilon}(-a_{n-j} u)+\iota u a_{n-j} \varepsilon_{j}) du.
\end{align*}

Recall that $\mathbb{E} \varepsilon_j=0$ for $\alpha\in (1,2)$. Then, for any $r\in (1,\alpha\beta)$, by von Bahr-Esseen inequality in \cite{vBE}, 
\begin{align*}
\E |T_{N}-U_{N}|^{r}
\leq \sum^{N-1}_{j=-\infty} \mathbb{E} \Big|\sum^{N-1}_{n=1\vee (j+1)}  \int_{\R} \widehat{K}(u) \phi(-u) (e^{-\iota u a_{n-j} \varepsilon_j}-\phi_{\varepsilon}(-a_{n-j} u)+\iota u a_{n-j} \varepsilon_j) du\Big|^{r}.
\end{align*}

For any $\delta\in (\frac{1}{\beta}, \frac{\alpha}{r})$, by Lemma \ref{lemb} and $|e^{\iota x}-1-\iota x|\leq c_{\delta} |x|^{\delta}$ for all $x\in\R$,
\begin{align*}
|e^{-\iota u a_{n-j} \varepsilon_{j}}-\phi_{\varepsilon}(-a_{n-j} u)+\iota u a_{n-j} \varepsilon_j|
&\leq |e^{-\iota u a_{n-j} \varepsilon_{j}}-1+\iota u a_{n-j} \varepsilon_j|+|1-\phi_{\varepsilon}(-a_{n-j} u)|\\
&\leq c_1 |u|^{\delta}(|a_{n-j} \varepsilon_j|^{\delta}+|a_{n-j}|^{\delta}).
\end{align*}

Since $\alpha\in (1,2)$, $\beta\in (\frac{1}{\alpha},1]$, $\delta\in (\frac{1}{\beta}, \frac{\alpha}{r})$ and $r\in (1,\alpha\beta)$, it is easy to see that 
\[
0<2-\alpha\beta<1+r-\alpha\beta<1+r(1-\beta\delta)<1.
\] 
So there exists $\beta'\in (0,\beta)$ such that $\beta' \delta>1$ and $1+r(1-\beta'\delta)\in(0,1)$. 

Now, using assumptions ({\bf A1})-({\bf A2}), Theorem 2.6.4 in \cite{IL} and Proposition 1(i) in \cite{A},
\begin{align}  \label{sumint}
\E |T_{N}-U_{N}|^{r}  \nonumber
&\leq c_2\, \mathbb{E}(|\varepsilon_{1}|^{r\delta}+1) \sum^{N-1}_{j=-\infty} \Big|\sum^{N-1}_{n=1\vee (j+1)} |a_{n-j}|^{\delta} \Big|^{r} \\  \nonumber
&\leq c_3\, \sum^{N-1}_{j=1} \Big|\sum^{N-1}_{n=j+1} (n-j)^{-\beta'\delta} \Big|^{r}+c_3\, \sum^0_{j=-\infty} \Big|\sum^{N-1}_{n=1} (n-j)^{-\beta'\delta} \Big|^{r}\\  \nonumber
&\leq c_4\, \bigg[N+\sum^{\infty}_{j=1} \Big(j^{1-\beta'\delta}-(N+j)^{1-\beta'\delta} \Big)^{r} \bigg]\\  \nonumber
&\leq c_5\, \bigg[N+\sum^{N}_{j=1} \Big(j^{1-\beta'\delta}-(N+j)^{1-\beta'\delta} \Big)^{r}+N^{1+r(1-\beta'\delta)}\int^{\infty}_1 \Big( x^{1-\beta'\delta}- (1+x)^{1-\beta'\delta} \Big)^{r} dx\bigg]\\ 
&\leq c_6\, N,
\end{align}
where in the last inequality we use $1+r(1-\beta'\delta)\in(0,1)$ and the fact that 
\[
0<x^{1-\beta'\delta}-(1+x)^{1-\beta'\delta}=(\beta'\delta-1)\int^{x+1}_x t^{-\beta'\delta} dt<\int^{x+1}_x t^{-\beta'\delta} dt<x^{-\beta'\delta}
\]
for all $x>0$.
\end{proof}

\medskip
\noindent
{\bf Proof of Theorem \ref{thm1}}:  We divide the proof into two parts.

\noindent
{\bf Part I}: We show the convergence of finite-dimensional distributions. Note that 
\[
N^{\beta-\frac{1}{\alpha}-1}\ell^{-1}(N)h_{\alpha}^{-1/\alpha}(N)
\]
is a regularly varying function at $\infty$ with index $\beta-\frac{1}{\alpha}-1$. For any $t>0$, by Proposition \ref{prop} and Lemma \ref{lemrd}, we could easily obtain that 
\[
N^{\beta-\frac{1}{\alpha}-1}\ell^{-1}(N)h_{\alpha}^{-1/\alpha}(N)(S_{[Nt]}-U_{[Nt]})
\] 
converges in probability to $0$ as $N$ tends to infinity. Then, the desired convergence of finite-dimensional distributions follows easily from  Theorem 1(ii) in \cite{A}.

\noindent
{\bf Part II}: We show that the family
\[
\left\{N^{\beta-\frac{1}{\alpha}-1}\ell^{-1}(N)h_{\alpha}^{-1/\alpha}(N)S_{[Nt]}:\; t\in [0,1]\right\}_{N\geq 1}
\]
is tight in the Skorohod space $D([0,1])$. By Theorem 13.5 in \cite{Billingsley}, it suffices to show that there exist $\theta_1>0$, $\theta_2>0$ and $c>0$ such that
\begin{align} \label{tightness}
\mathbb{P}\bigg(\min\Big\{|S_{[Nt]}-S_{[Nt_1]}|, |S_{[Nt_2]}-S_{[Nt]}|\Big\}\geq \lambda N^{1+\frac{1}{\alpha}-\beta}\ell(N)h_{\alpha}^{1/\alpha}(N) \bigg)\leq \frac{c}{\lambda^{\theta_2}}(t_2-t_1)^{1+\theta_1} 
\end{align}
for all $ 0\leq t_1\leq t\leq t_2\leq 1$, $N\geq 1$ and $\lambda>0$. 

Note that the left hand side of (\ref{tightness}) is equal to $0$ when $0<t_2-t_1<\frac{1}{N}$ and
\begin{align*}
&\mathbb{P}\bigg(\min\Big\{|S_{[Nt]}-S_{[Nt_1]}|, |S_{[Nt_2]}-S_{[Nt]}|\Big\}\geq \lambda N^{1+\frac{1}{\alpha}-\beta}\ell(N)h_{\alpha}^{1/\alpha}(N) \bigg)\\
&\leq \min\left\{\mathbb{P}\Big(|S_{[Nt]}-S_{[Nt_1]}|\geq \lambda N^{1+\frac{1}{\alpha}-\beta}\ell(N)h_{\alpha}^{1/\alpha}(N) \Big),\mathbb{P}\Big(|S_{[Nt_2]}-S_{[Nt]}|\geq \lambda N^{1+\frac{1}{\alpha}-\beta}\ell^{1}(N)h_{\alpha}^{1/\alpha}(N) \Big)\right\}\\
&=\min\left\{\mathbb{P}\Big(|S_{[Nt]-[Nt_1]}|\geq \lambda N^{1+\frac{1}{\alpha}-\beta}\ell(N)h_{\alpha}^{1/\alpha}(N) \Big),\mathbb{P}\Big(|S_{[Nt_2]-[Nt]}|\geq \lambda N^{1+\frac{1}{\alpha}-\beta}\ell(N)h_{\alpha}^{1/\alpha}(N) \Big)\right\},
\end{align*}
where in the last equality we use the fact that $\{K(X_n)-\mathbb{E}K(X_n):\, n\geq 1\}$ is a stationary sequence. 

Therefore, we only need to show that there exist $\theta_1>0$, $\theta_2>0$ and $c>0$ such that
\begin{align} \label{tightness1}
\mathbb{P}\Big(\big|S_{[Nt_2]-[Nt_1]}\big|\geq \lambda N^{1+\frac{1}{\alpha}-\beta}\ell(N)h_{\alpha}^{1/\alpha}(N) \Big)\leq \frac{c}{\lambda^{\theta_2}}(t_2-t_1)^{1+\theta_1} 
\end{align}
for all $ 0\leq t_1\leq t_2\leq 1$, $N\geq 1$,  $t_2-t_1\geq \frac{1}{N}$ and $\lambda>0$.  

Note that 
\[
S_{[Nt_2]-[Nt_1]}=S_{[Nt_2]-[Nt_1]}-T_{[Nt_2]-[Nt_1]}+T_{[Nt_2]-[Nt_1]}-U_{[Nt_2]-[Nt_1]}+U_{[Nt_2]-[Nt_1]}
\]
and thus
\begin{align*}
&\mathbb{P}\Big(\big|S_{[Nt_2]-[Nt_1]}\big|\geq \lambda N^{1+\frac{1}{\alpha}-\beta}\ell(N)h_{\alpha}^{1/\alpha}(N) \Big)\\
&\qquad\leq \mathbb{P}\Big(\big|S_{[Nt_2]-[Nt_1]}-T_{[Nt_2]-[Nt_1]}\big|\geq \lambda N^{1+\frac{1}{\alpha}-\beta}\ell(N)h_{\alpha}^{1/\alpha}(N)/3 \Big)\\
&\qquad\qquad+\mathbb{P}\Big(\big|T_{[Nt_2]-[Nt_1]}-U_{[Nt_2]-[Nt_1]}\big|\geq \lambda N^{1+\frac{1}{\alpha}-\beta}\ell(N)h_{\alpha}^{1/\alpha}(N)/3 \Big)\\
&\qquad\qquad\qquad+\mathbb{P}\Big(\big|U_{[Nt_2]-[Nt_1]}\big|\geq \lambda N^{1+\frac{1}{\alpha}-\beta}\ell(N)h_{\alpha}^{1/\alpha}(N)/3 \Big).
\end{align*}

In the sequel, we will show that the three terms on the right hand side of the above inequality can have an upper bound as in (\ref{tightness1}).

By Proposition \ref{prop} and the Chebyshev inequality, for any $\alpha'\in(0,\alpha)$, $\beta'\in(0,\beta)$ with $\alpha'\beta'>1$, there exists a positive constant $c_{\alpha',\beta'}$ such that
\begin{align*}
&\mathbb{P}\Big(\big|S_{[Nt_2]-[Nt_1]}-T_{[Nt_2]-[Nt_1]}\big|\geq \lambda N^{1+\frac{1}{\alpha}-\beta}\ell(N)h_{\alpha}^{1/\alpha}(N)/3 \Big)\\
&\leq  c_{\alpha',\beta'}\begin{cases}
             \frac{\big([Nt_2]-[Nt_1]\big)^{4-2\alpha'\beta'}}{\lambda^2 N^{2+\frac{2}{\alpha}-2\beta}\ell^{2}(N)h_{\alpha}^{2/\alpha}(N)} & \quad \text{if}\;\; 1<\alpha\beta\leq \frac{3}{2} \; \text{and}\; \alpha'\beta'>1,\\ \\
           \frac{[Nt_2]-[Nt_1]}{\lambda^2 N^{2+\frac{2}{\alpha}-2\beta}\ell^{2}(N)h_{\alpha}^{2/\alpha}(N)} & \quad \text{if}\;\; \alpha\beta>\frac{3}{2} \; \text{and}\; \alpha'\beta'>\frac{3}{2}.
        \end{cases}
\end{align*} 
Note that $t_2-t_1\geq \frac{1}{N}$. So
\[
0\leq [Nt_2]-[Nt_1]<Nt_2+1-(Nt_1-1)\leq 3N(t_2-t_1)
\]
and thus
\begin{align*}
&\mathbb{P}\Big(\big|S_{[Nt_2]-[Nt_1]}-T_{[Nt_2]-[Nt_1]}\big|\geq \lambda N^{1+\frac{1}{\alpha}-\beta}\ell(N)h_{\alpha}^{1/\alpha}(N)/3 \Big)\\
&\leq  9\, c_{\alpha',\beta'}\begin{cases}
             \frac{N^{2+2\beta-\frac{2}{\alpha}-2\alpha'\beta'}}{\lambda^2 \ell^{2}(N)h_{\alpha}^{2/\alpha}(N)} (t_2-t_1)^{4-2\alpha'\beta'}& \quad \text{if}\;\; 1<\alpha\beta\leq \frac{3}{2} \; \text{and}\; \alpha'\beta'>1,\\ \\
           \frac{N^{2\beta-1-\frac{2}{\alpha}}}{\lambda^2 \ell^{2}(N)h_{\alpha}^{2/\alpha}(N)} (t_2-t_1) & \quad \text{if}\;\; \alpha\beta>\frac{3}{2} \; \text{and}\; \alpha'\beta'>\frac{3}{2}.
        \end{cases}
\end{align*} 
Recall that $1<\alpha<2$ and $\frac{1}{\alpha}<\beta<1$. In the case $1<\alpha\beta\leq \frac{3}{2}$,  
\[
2+2\beta-\frac{2}{\alpha}-2\alpha'\beta'=\frac{2}{\alpha}(\alpha\beta-1)(1-\alpha)+2(\alpha\beta-\alpha'\beta').
\]
Choose $\alpha'$ and $\beta'$ close enough to $\alpha$ and $\beta$, respectively. We can obtain that 
\[
\frac{N^{2+2\beta-\frac{2}{\alpha}-2\alpha'\beta'}}{\ell^{2}(N)h_{\alpha}^{2/\alpha}(N)}
\] 
is uniformly bounded and $4-2\alpha'\beta'>1$. So 
\[
\mathbb{P}\Big(\big|S_{[Nt_2]-[Nt_1]}-T_{[Nt_2]-[Nt_1]}\big|\geq \lambda N^{1+\frac{1}{\alpha}-\beta}\ell(N)h_{\alpha}^{1/\alpha}(N)/3 \Big)
\]
can have an upper bound as in (\ref{tightness1}). In the case $\alpha\beta>\frac{3}{2}$, 
\[
2\beta-1-\frac{2}{\alpha}=\frac{1}{\alpha}(\alpha\beta-\alpha+\alpha\beta-2)<0.
\]
Recall that $t_2-t_1\geq \frac{1}{N}$.  Choose a constant $\theta_1\in (0,1+\frac{2}{\alpha}-2\beta)$. Then $N^{-\theta_1}\leq (t_2-t_1)^{\theta_1}$ and $\frac{N^{2\beta-1-\frac{2}{\alpha}+\theta_1}}{\ell^2(N)h^{2/\alpha}_{\alpha}(N)}$ is uniformly bounded. Hence
\[
\mathbb{P}\Big(\big|S_{[Nt_2]-[Nt_1]}-T_{[Nt_2]-[Nt_1]}\big|\geq \lambda N^{1+\frac{1}{\alpha}-\beta}\ell(N)h_{\alpha}^{1/\alpha}(N)/3 \Big)
\]
can have an upper bound as in (\ref{tightness1}).

By Lemma \ref{lemrd} and similar arguments as above, 
\[
\mathbb{P}\Big(\big|T_{[Nt_2]-[Nt_1]}-U_{[Nt_2]-[Nt_1]}\big|\geq \lambda N^{1+\frac{1}{\alpha}-\beta}\ell(N)h_{\alpha}^{1/\alpha}(N)/3 \Big)
\]
can have an upper bound as in (\ref{tightness1}).

Finally, using similar arguments as in the proof of Theorem 2 in \cite{A}, 
\[
\mathbb{P}\Big(\big|U_{[Nt_2]-[Nt_1]}\big|\geq \lambda N^{1+\frac{1}{\alpha}-\beta}\ell(N)h_{\alpha}^{1/\alpha}(N)/3 \Big)
\]
can have an upper bound as in (\ref{tightness1}). This completes the proof.

\subsection{Proof of Theorem \ref{thm2}}

In this subsection, we give the proof of Theorem \ref{thm2}. Let
\begin{align} \label{taun}
\mathcal{T}_{N}=\sum^{N}_{n=1} \sum^{\infty}_{j=1} \big(K_{\infty}(a_j \varepsilon_{n})-\E K_{\infty}(a_j \varepsilon_{n}) \big).
\end{align}
The convergence of the infinite series in (\ref{taun}) follows easily from (\ref{Ksum}) below. Then we have the following estimates for $T_{N}-\mathcal{T}_{N}$.
\begin{lemma} \label{lm51} Suppose that assumptions ({\bf A1})-({\bf A2}) hold, $\alpha\beta\in (1,2)$ and $\beta>1$.  For any $\alpha'\in (0,\alpha)$, $\beta'\in (1,\beta)$ with $\alpha\in (0,1]$ and $\alpha'\beta'>1$, there is a positive constant $c_{\alpha',\beta',3}>0$ such that
\[
\E |T_{N}-\mathcal{T}_{N}| \leq c_{\alpha',\beta',3}\, N^{2-\alpha'\beta'}.
\]
For any $\alpha'\in (1,\alpha)$, $\beta'\in (1,\beta)$ with $\alpha\in (1,2)$ and $\alpha'\beta'>1$, there is a positive constant $c_{\alpha',\beta',4}>0$ such that
\[
\E |T_{N}-\mathcal{T}_{N}|^{\alpha'} \leq c_{\alpha',\beta',4}\, N^{1+\alpha'(1-\beta')}.
\]

\end{lemma}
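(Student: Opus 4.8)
The strategy is to write $T_N-\mathcal{T}_N$ as a sum of \emph{independent, centred} random variables by grouping the summands of (\ref{tn2}) and (\ref{taun}) according to which innovation enters each of them, and then to bound this sum by the triangle inequality when $\alpha\le 1$ and by the von Bahr--Esseen inequality \cite{vBE} when $\alpha\in(1,2)$, exactly in the spirit of Proposition \ref{prop} and Lemma \ref{lemrd}. Writing $c_j=\E K_\infty(a_j\varepsilon_1)$, substitute $m=n-j$ in (\ref{tn2}) and $m=n$ in (\ref{taun}) and collect terms by $m$ (the rearrangement is legitimate because all series involved converge absolutely, by Step 2). An innovation $\varepsilon_m$ contributes to $T_N-\mathcal{T}_N$ only for $m\le 0$ (where $\mathcal{T}_N$ has nothing) or for $1\le m\le N$ (where the ranges of the inner sum differ); after the substitutions $m=-p$ and $m=N-p$ this gives
\[
T_N-\mathcal{T}_N=R_1-R_2,\qquad R_1:=\sum_{p\ge 0}W^{(N)}_p(\varepsilon_{-p}),\qquad R_2:=\sum_{p=0}^{N-1}W_p(\varepsilon_{N-p}),
\]
where $W^{(N)}_p(x)=\sum_{j=p+1}^{p+N}\bigl(K_\infty(a_jx)-c_j\bigr)$ and $W_p(x)=\sum_{j=p+1}^{\infty}\bigl(K_\infty(a_jx)-c_j\bigr)$. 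In each of $R_1,R_2$ the summands involve pairwise distinct innovations, hence are independent, and each has mean zero.

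\textbf{Step 2: pointwise estimates.} By (\textbf{A1})--(\textbf{A2}) the density $f$ of $X_1$ is bounded with bounded derivative, so $|K_\infty(y)-K_\infty(0)|=\bigl|\int_{\R}K(z)(f(z-y)-f(z))\,dz\bigr|\le c\,(|y|\wedge 1)$, whence $|K_\infty(a_jx)-c_j|\le c\,(|a_jx|\wedge 1)+c\,e_j$ with $e_j:=\E(|a_j\varepsilon_1|\wedge 1)$. Since $a_j\sim j^{-\beta}\ell(j)$ and $\beta'<\beta$ we have $|a_j|\le c\,j^{-\beta'}$; since $\E|\varepsilon_1|^{\gamma}<\infty$ for every $\gamma<\alpha$, we get $e_j\le c|a_j|^{\gamma}\le c\,j^{-\beta'\gamma}$ for such $\gamma$, and when $\alpha>1$ we may simply use $e_j\le|a_j|\,\E|\varepsilon_1|\le c\,j^{-\beta'}$. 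In particular all the series above (and the one defining $\mathcal{T}_N$) converge absolutely.

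\textbf{Step 3: the two cases.} \emph{Case $\alpha\le 1$.} Take $\gamma=\alpha'$ (so $\gamma<\alpha\le 1$ and $\beta'\gamma=\alpha'\beta'\in(1,2)$). Then $\E|W_p(\varepsilon_1)|\le c\sum_{j\ge p+1}e_j\le c(p+1)^{1-\alpha'\beta'}$ and $\E|W^{(N)}_p(\varepsilon_1)|\le c\sum_{j=p+1}^{p+N}e_j$, hence $\E|R_2|\le c\sum_{p=0}^{N-1}(p+1)^{1-\alpha'\beta'}\le c\,N^{2-\alpha'\beta'}$; interchanging the order of summation and using $\#\{p\ge 0:p+1\le j\le p+N\}=j\wedge N$,
\[
\E|R_1|\le c\sum_{j\ge 1}(j\wedge N)\,e_j\le c\Bigl(\sum_{j\le N}j^{1-\alpha'\beta'}+N\sum_{j>N}j^{-\alpha'\beta'}\Bigr)\le c\,N^{2-\alpha'\beta'},
\]
so $\E|T_N-\mathcal{T}_N|\le c\,N^{2-\alpha'\beta'}$. \emph{Case $\alpha\in(1,2)$.} Now $\alpha'\in(1,\alpha)$, so $\E|\varepsilon_1|^{\alpha'}<\infty$, and $1+\alpha'(1-\beta')>0$ since $\alpha'\beta'<\alpha\beta<2$ and $\alpha'>1$. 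Using $|a_jx|\wedge 1\le|a_j||x|$ and $e_j\le c|a_j|$ gives $|W_p(x)|\le c(|x|+1)(p+1)^{1-\beta'}$ and $|W^{(N)}_p(x)|\le c(|x|+1)\min\bigl((p+1)^{1-\beta'},\,N(p+1)^{-\beta'}\bigr)$. Taking $\alpha'$-th moments and applying von Bahr--Esseen,
\[
\E|R_2|^{\alpha'}\le c\sum_{p=0}^{N-1}(p+1)^{\alpha'(1-\beta')}\le c\,N^{1+\alpha'(1-\beta')},\qquad
\E|R_1|^{\alpha'}\le c\sum_{p=0}^{N-1}(p+1)^{\alpha'(1-\beta')}+c\,N^{\alpha'}\sum_{p\ge N}(p+1)^{-\alpha'\beta'}\le c\,N^{1+\alpha'(1-\beta')},
\]
the last tail being $\le c\,N^{1-\alpha'\beta'}$ because $\alpha'\beta'>1$. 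Hence $\E|T_N-\mathcal{T}_N|^{\alpha'}\le c\,N^{1+\alpha'(1-\beta')}$.

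\textbf{Main difficulty.} The only non-mechanical point is the $R_1$ estimate in the case $\alpha\in(1,2)$, i.e.\ the contribution of the infinitely many ``past'' innovations $\varepsilon_{-p}$: the naive bound $\sum_{j\ge p+1}|a_j|\le c(p+1)^{1-\beta'}$, summed over all $p\ge 0$, converges only if $\alpha'(\beta'-1)>1$, which need not hold. One must keep the window length $N$ in $W^{(N)}_p$ and split the $p$-sum at $p\sim N$, using $\sum_{j=p+1}^{p+N}|a_j|\le c\,N(p+1)^{-\beta'}$ for $p\ge N$; the hypothesis $\alpha'\beta'>1$ is exactly what makes the resulting tail series over $p\ge N$ summable and produces the exponent $1+\alpha'(1-\beta')$. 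Everything else is routine, the slowly varying factor $\ell$ being absorbed into the freedom $\beta'<\beta$.
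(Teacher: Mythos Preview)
Your proof is correct and takes essentially the same approach as the paper: the same reindexing into blocks indexed by distinct innovations (your $R_1,R_2$ coincide with the paper's $R_{N,2},R_{N,1}$), the same pointwise bound on $K_\infty(a_jx)-c_j$, and the same triangle/von Bahr--Esseen argument in the two regimes $\alpha\le 1$ and $\alpha\in(1,2)$. Your handling of the tail $\sum_{p\ge N}$ in the case $\alpha>1$ via the crude bound $\sum_{j=p+1}^{p+N}|a_j|\le cN(p+1)^{-\beta'}$ is marginally more direct than the paper's use of the finer difference $n^{1-\beta'}-(N+n)^{1-\beta'}$ together with the integral estimate (\ref{sumint}), but this is only a cosmetic variation.
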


\begin{proof} Note that 
\begin{align*}
\mathcal{T}_{N}-T_{N}=R_{N,1}-R_{N,2}
&:=\left(\sum^N_{n=1}\sum^{\infty}_{j=N-n+1}-\sum^0_{n=-\infty}\sum^{N-n}_{j=1-n}\right)\Big(K_{\infty}(a_j\varepsilon_n)-\E K_{\infty}(a_j\varepsilon_n)\Big)
\end{align*}
and
\begin{align*}
K_{\infty}(a_j\varepsilon_n)-\E K_{\infty}(a_j\varepsilon_n)
&=\frac{1}{2\pi}\int_{\R} \widehat{K}(u) \phi(-u)(e^{-\iota u a_j \varepsilon_{n}}-\phi_{\varepsilon}(-a_j u)) du.
\end{align*}

Let $K^{\infty}_{j,n}=K_{\infty}(a_j\varepsilon_n)-\E K_{\infty}(a_j\varepsilon_n)$. Then, assumptions ({\bf A1})-({\bf A2}) and Lemma \ref{lemb},
\begin{align} \label{Ksum}
|K^{\infty}_{j,n}|   \nonumber
&\leq \int_{\R} |\widehat{K}(u)| |\phi(-u)||(|e^{-\iota u a_j \varepsilon_{n}}-1|+|1-\phi_{\varepsilon}(-a_j u)|) du\\  \nonumber
&\leq c_{1}\, \int_{\R} |\widehat{K}(u)| |\phi(u)||( |u a_j \varepsilon_{n}|^{\min(\alpha',1)}+|a_j u|^{\alpha'}) du\\ 
&\leq c_{2}\, \Big(|a_j|^{\min(\alpha',1)} |\varepsilon_n|^{\min(\alpha',1)}+|a_j|^{\alpha'}\Big).
\end{align}

By Theorem 2.6.4 in \cite{IL}, $\mathbb{E} |\varepsilon_n|^{\alpha'}<\infty$. We first consider the case $\alpha\in(0,1]$. Let $\delta=\alpha'\beta'$. Clearly, $\delta\in (1,2)$. By Proposition 1 in \cite{A}, we can obtain that
\begin{align*}
\E |R_{N,1}|+\E |R_{N,2}|
&\leq c_{3}\,\sum^N_{n=1}\sum^{\infty}_{j=N-n+1} |a_j|^{\alpha'}+c_{3}\,\sum^0_{n=-\infty}\sum^{N-n}_{j=1-n} |a_j|^{\alpha'}\\
&\leq c_{4}\,\sum^N_{n=1}\sum^{\infty}_{j=N-n+1} j^{-\delta}+c_{4}\,\sum^{N}_{n=0}\sum^{N+n}_{j=1+n} j^{-\delta}+c_{4}\,\sum^{\infty}_{n=N+1}\sum^{N+n}_{j=1+n} j^{-\delta}\\
&\leq c_{5}\,\big(1+\sum^{N-1}_{n=1} (N-n)^{1-\delta}\big)+c_{5}\,\big(1+\sum^{N}_{n=1}  n^{1-\delta}\big)+c_{5}\,\sum^{\infty}_{n=N+1}\big(n^{1-\delta}-(N+n)^{1-\delta}\big)\\
&\leq c_{6}\,N^{2-\delta}+c_{6}\,\int^{\infty}_N (x^{1-\delta}-(N+x)^{1-\delta})\, dx\\
&\leq c_{7}\,N^{2-\delta}+c_{7}\,N^{2-\delta}\int^{\infty}_1(x^{1-\delta}-(1+x)^{1-\delta})\, dx\\
&\leq c_{8}\,N^{2-\delta},
\end{align*}
where %in the last third and last inequalities we use the facts that 
%\[
%f_N(x):=x^{1-\delta}-(N+x)^{1-\delta}
%\] 
%is decreasing on $[N,\infty)$ 
in the last inequality we use the fact that $\delta\in (1,2) $ and 
\[
0<x^{1-\delta}-(1+x)^{1-\delta}\leq x^{-\delta}
\]
for all $x>0$.

Now we consider the case $\alpha\in(1,2)$. Recall that $\alpha'\in(1,\alpha)$ and $\beta'\in(1,\beta)$ with $\alpha'\beta'>1$. Then, by inequality (\ref{Ksum}) and von Bahr-Esseen inequality in \cite{vBE}, we could obtain that
\begin{align*}
\mathbb{E}|R_{N,1}|^{\alpha'}
&\leq 2 \sum^N_{n=1}\mathbb{E}\Big|\sum^{\infty}_{j=N-n+1}K^{\infty}_{j,n}\Big|^{\alpha'}\\
&\leq c_9\, \left(1+\sum^{N-1}_{n=1} \mathbb{E}\left| (N-n)^{1-\beta'}|\varepsilon_n|+(N-n)^{1-\alpha'\beta'}\right|^{\alpha'}\right)\\
&\leq c_{10}\, N^{1+\alpha'(1-\beta')}
\end{align*}
and
\begin{align*}
\mathbb{E}|R_{N,2}|^{\alpha'}
%&=\mathbb{E}\left(\liminf_{M\to \infty}\Big|\sum^0_{n=-M}\sum^{N-n}_{j=1-n}K^{\infty}_{j,n}\Big|^{\alpha'}\right)\\
%&\leq \liminf_{M\to \infty}\mathbb{E}\left|\sum^0_{n=-M}\sum^{N-n}_{j=1-n}K^{\infty}_{j,n}\right|^{\alpha'}\\
%&\leq 2 \liminf_{M\to \infty} \sum^0_{n=-M}\mathbb{E}\left|\sum^{N-n}_{j=1-n}K^{\infty}_{j,n}\right|^{\alpha'}\\
&\leq 2 \sum^0_{n=-\infty}\mathbb{E}\Big|\sum^{N-n}_{j=1-n}K^{\infty}_{j,n}\Big|^{\alpha'}\\
&\leq c_{11} \sum^{\infty}_{n=0}\Bigg[\bigg(\sum^{N+n}_{j=1+n}|a_j|\bigg)^{\alpha'} +\bigg(\sum^{N+n}_{j=1+n}|a_j|^{\alpha'}\bigg)^{\alpha'}\Bigg]\\
&\leq c_{12} \sum^{N}_{n=0} \bigg(\sum^{N+n}_{j=1+n}|a_j|\bigg)^{\alpha'}+c_{12} \sum^{\infty}_{n=N+1}\bigg(\sum^{N+n}_{j=1+n}|a_j|\bigg)^{\alpha'}\\
&\leq c_{13}\, \Big(1+\sum^{N}_{n=1} n^{\alpha'(1-\beta')}\Big)+c_{13}\sum^{\infty}_{n=N+1}\Big(n^{1-\beta'}-(N+n)^{1-\beta'}\Big)^{\alpha'}\\
&\leq c_{14}\, N^{1+\alpha'(1-\beta')},
\end{align*}
where in the last inequality we use similar arguments as in obtaining (\ref{sumint}).

Therefore, $\E |T_{N}-\mathcal{T}_{N}|^{\alpha'}$ is less than a constant multiple of $N^{1+\alpha'(1-\beta')}$ for the case $\alpha\in(1,2)$. This completes the proof.
\end{proof}

For each $x\in\R$, let 
\begin{align} \label{etakx}
\eta_K(x)=\sum\limits^{\infty}_{j=1} (K_{\infty}(a_j x)-\E K_{\infty}(a_j \varepsilon_1)).
\end{align}
The convergence of the infinite series in (\ref{etakx}) follows easily from (\ref{lipschitz}) below. Recall the definition of $\mathcal{T}_N$ in (\ref{taun}). Then $\mathcal{T}_N=\sum\limits^{N}_{n=1}  \eta_K(\varepsilon_n)$ and is a finite sum of i.i.d. random variables $\eta_K(\varepsilon_n)$.  

Next we show the asymptotic behavior of $\mathcal{T}_N$.
\begin{proposition} \label{prop52}  Under the assumptions of Theorem \ref{thm2},
\[
\frac{1}{\left(\frac{(\ell^{\leftarrow}_{\beta})^{\alpha}}{h\circ \ell^{\leftarrow}_{\beta}}\right)^{\leftarrow}(N)} \mathcal{T}_N \overset{\mathcal{L}}{\longrightarrow}  (\gamma_2+\gamma_1)^{\frac{1}{\alpha\beta}}\bigg(\frac{\gamma_2-\gamma_1}{(\alpha\beta-1)(\gamma_2+\gamma_1)}+\mathbf{Z}^{\alpha\beta}\bigg)
\]
as $N$ tends to infinity, where $\mathbf{Z}^{\alpha\beta}$ is an $\alpha\beta$-stable random variable whose characteristics function has the form
\begin{align} \label{Z}
\mathbb{E} e^{\iota u \mathbf{Z}^{\alpha\beta}} \nonumber
&=\exp\bigg(\iota u\frac{\gamma_2-\gamma_1}{\gamma_2+\gamma_1} \alpha\beta \left[\int^{\infty}_1\frac{x^{-\alpha\beta}}{x^2+1}dx-\int^1_0\frac{x^{2-\alpha\beta}}{x^2+1}dx\right]\\ 
&\qquad\qquad\qquad+\frac{\gamma_2}{\gamma_2+\gamma_1}\alpha\beta\int^{\infty}_0(e^{\iota u x}-1-\frac{\iota ux}{x^2+1})\frac{dx}{x^{\alpha\beta+1}} \nonumber \\
&\qquad\qquad\qquad\qquad+\frac{\gamma_1}{\gamma_2+\gamma_1}\alpha\beta\int^{0}_{-\infty}(e^{\iota u x}-1-\frac{\iota ux}{x^2+1})\frac{dx}{|x|^{\alpha\beta+1}}\bigg).
\end{align}
\end{proposition}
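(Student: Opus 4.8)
The plan is to use that $\mathcal{T}_N=\sum_{n=1}^N\eta_K(\varepsilon_n)$ is a sum of i.i.d.\ centered random variables, so that the statement reduces to the tail behaviour of the single variable $\eta_K(\varepsilon_1)$ together with the classical limit theorem for i.i.d.\ sums attracted to a stable law (equivalently, to computing $\lim_N\bigl(\phi_{\eta_K(\varepsilon_1)}(u/B_N)\bigr)^N$). Write $g(x)=\sum_{j\ge 1}\bigl(K_\infty(a_jx)-K_\infty(0)\bigr)$, so that $\eta_K(x)=g(x)-\E g(\varepsilon_1)$ and $\E\eta_K(\varepsilon_1)=0$. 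First I would record the bound underlying (\ref{lipschitz}): by ({\bf A2}) the density $f$ is smooth with all derivatives bounded, hence $K_\infty$ is bounded and Lipschitz, so $|K_\infty(a_jx)-K_\infty(0)|\le c\,|a_j|\,|x|$; combined with boundedness of $K_\infty$ and $\sum_j|a_j|<\infty$ (valid since $\beta>1$) this gives absolute convergence of the series defining $g$ and $\eta_K$ and, en passant, finiteness of $C^{\pm}_K$ (bounded integrand near $0$, integrable tail since $\beta>1$).

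The heart of the argument is the deterministic asymptotics
\[
g(x)\sim C^{+}_K\,\ell_\beta(|x|)\quad(x\to+\infty),\qquad g(x)\sim C^{-}_K\,\ell_\beta(|x|)\quad(x\to-\infty).
\]
To obtain these I would compare $\sum_{j\ge 1}\bigl(K_\infty(a_jx)-K_\infty(0)\bigr)$ with $\int_1^\infty\bigl(K_\infty(a(s)x)-K_\infty(0)\bigr)\,ds$, where $a(s)=s^{-\beta}\ell(s)$ (the error controlled by the Lipschitz bound and eventual monotonicity of $a$), substitute $s=\ell_\beta(|x|)\,r$, and use the Karamata representation ({\bf A3}) together with hypothesis (\ref{ell}) to prove $a\bigl(\ell_\beta(|x|)\,r\bigr)\,|x|\to r^{-\beta}$ uniformly for $r$ in compact subsets of $(0,\infty)$; dominated convergence, with dominating function $c\,(r^{-\beta}\wedge 1)$ which is integrable because $\beta>1$, then yields the limit $\int_0^\infty\bigl(K_\infty(\pm r^{-\beta})-K_\infty(0)\bigr)\,dr=C^{\pm}_K$. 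This step, and in particular the uniform control of the slowly varying parts, is exactly where ({\bf A3}) and (\ref{ell}) enter, and I expect it to be the main obstacle.

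Next I would transfer these asymptotics to the tails of $g(\varepsilon_1)$. Since $\ell_\beta$ is regularly varying of index $1/\beta$, its inverse $\ell_\beta^{\leftarrow}$ is regularly varying of index $\beta$; using (\ref{h}) and a sandwich argument over $\{\varepsilon_1>M\}$ and $\{\varepsilon_1<-M\}$, letting $M\to\infty$, one gets
\[
\P\bigl(g(\varepsilon_1)>y\bigr)\sim\gamma_2\,\bar H(y),\qquad \P\bigl(g(\varepsilon_1)<-y\bigr)\sim\gamma_1\,\bar H(y)\qquad(y\to\infty),
\]
where $\bar H(y)=h\bigl(\ell_\beta^{\leftarrow}(y)\bigr)\bigl(\ell_\beta^{\leftarrow}(y)\bigr)^{-\alpha}$ is regularly varying of index $-\alpha\beta$; the cases $C^{\pm}_K=0$ and $C^{\pm}_K<0$ are handled separately, the vanishing ones contributing negligibly and the negative ones contributing to the opposite tail, exactly as recorded in the definitions of $\gamma_1$ and $\gamma_2$. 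Note that $\E|g(\varepsilon_1)|<\infty$ since $\alpha\beta>1$, which is what permits mean-centering.

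Finally, take $B_N=\bigl(\frac{(\ell_\beta^{\leftarrow})^{\alpha}}{h\circ\ell_\beta^{\leftarrow}}\bigr)^{\leftarrow}(N)$, which is regularly varying of index $1/(\alpha\beta)$ by Remark (iii) and satisfies $N\,\bar H(B_N)\to 1$ by construction, so $g(\varepsilon_1)$ is attracted to an $\alpha\beta$-stable law with this norming. Writing $\E\bigl[e^{\iota u\mathcal{T}_N/B_N}\bigr]=\bigl(\phi_{\eta_K(\varepsilon_1)}(u/B_N)\bigr)^N$ and using $\E\eta_K(\varepsilon_1)=0$ to write $N\bigl(\phi_{\eta_K(\varepsilon_1)}(u/B_N)-1\bigr)=N\,\E\bigl[e^{\iota ug(\varepsilon_1)/B_N}-1-\iota ug(\varepsilon_1)/B_N\bigr]$, I would split this expectation over $\{|g(\varepsilon_1)|\le\delta B_N\}$ and its complement and pass to the limit using the tail asymptotics above together with the elementary bounds $|e^{\iota t}-1|\le\min(|t|,2)$ and $|e^{\iota t}-1-\iota t|\le\min(t^2/2,2|t|)$. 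This produces the logarithm of the stated characteristic function: the internal drift $\alpha\beta\frac{\gamma_2-\gamma_1}{\gamma_2+\gamma_1}\bigl[\int_1^\infty\frac{x^{-\alpha\beta}}{x^2+1}\,dx-\int_0^1\frac{x^{2-\alpha\beta}}{x^2+1}\,dx\bigr]$ of $\mathbf Z^{\alpha\beta}$ and the external shift $\frac{\gamma_2-\gamma_1}{(\alpha\beta-1)(\gamma_2+\gamma_1)}$ arise from rewriting the mean-zero centering in terms of the $\frac{x}{x^2+1}$-truncated L\'evy--Khintchine form (a routine but somewhat lengthy evaluation of $\int\frac{x^3}{x^2+1}\,\nu(dx)$-type integrals against the limiting stable L\'evy measure $\nu$), while the scale $(\gamma_2+\gamma_1)^{1/(\alpha\beta)}$ reflects $N\,\P(|g(\varepsilon_1)|>B_N)\to\gamma_2+\gamma_1$. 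L\'evy's continuity theorem then concludes the proof.
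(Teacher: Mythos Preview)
Your proposal is correct and follows essentially the same three-step route as the paper: establish the deterministic asymptotics $\eta_K(x)\sim C^{\pm}_K\,\ell_\beta(|x|)$ as $x\to\pm\infty$, transfer these via (\ref{h}) to the tail behaviour $\P(\eta_K(\varepsilon_1)>x)$ and $\P(\eta_K(\varepsilon_1)<-x)$, and then invoke the classical stable limit theorem for i.i.d.\ sums. The only differences are cosmetic: for the first step the paper splits the sum into three ranges with a parameter $M\to\infty$ rather than passing through an integral with dominated convergence, and for the third step it cites Durrett's Theorem~3.7.2 and (3.7.10) rather than computing $\lim_N(\phi_{\eta_K(\varepsilon_1)}(u/B_N))^N$ by hand.
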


\begin{proof}  We divide the proof into several steps.

\noindent
{\bf Step 1} We first show that 
\begin{align} \label{etak}
\lim\limits_{x\to \pm\infty} \frac{1}{|x|^{\frac{1}{\beta}} \ell^{\frac{1}{\beta}}(|x|^{\frac{1}{\beta}})} \eta_K(x)=\int^{\infty}_0 \big(K_{\infty}(\pm t^{-\beta})-K_{\infty}(0)\big)dt:=C^{\pm}_K.
\end{align}
Using similar arguments as in obtaining (\ref{Ksum}), we can show that $K_{\infty}(a_j x)-\E K_{\infty}(a_j \varepsilon_1)$ is a Lipschitz continuous function on $\R$ and
\begin{align} \label{lipschitz}
|K_{\infty}(a_j x)-\E K_{\infty}(a_j \varepsilon_1)|\leq c_{1}\, (|a_j||x|+|a_j|^{\alpha'} )
\end{align}
for any $\alpha'\in(0,\alpha)$ with $\alpha'\beta>1$. So the series in the definition of $\eta_K(x)$ in (\ref{etakx}) converges absolutely for each $x\in\R$ and defines a Lipschitz continuous function on $\R$. 

For $x>0$,
\begin{align*}
\frac{1}{x^{\frac{1}{\beta}} \ell^{\frac{1}{\beta}}(x^{\frac{1}{\beta}})}\eta_K(x)
&=\frac{1}{x^{\frac{1}{\beta}}\ell^{\frac{1}{\beta}}(x^{\frac{1}{\beta}})}\sum^{\infty}_{j=1}  (K_{\infty}(a_jx)-K_{\infty}(0))+\frac{1}{x^{\frac{1}{\beta}}\ell^{\frac{1}{\beta}}(x^{\frac{1}{\beta}})}\sum^{\infty}_{j=1}  (K_{\infty}(0)-\mathbb{E}K_{\infty}(a_j\varepsilon_1))\\
&:=\text{I}+\text{II}.
\end{align*}

By (\ref{lipschitz}), $\sum\limits^{\infty}_{j=1} |K_{\infty}(0)-\mathbb{E}K_{\infty}(a_j\varepsilon_1)|<\infty$ and thus $|\text{II}|$ is less than a constant multiple of $\frac{1}{x^{\frac{1}{\beta}}\ell^{\frac{1}{\beta}}(x^{\frac{1}{\beta}})}$. Let $M$ be a large enough natural number. Then the term $\text{I}$ can be decomposed as 
\begin{align*}
\text{I}=\text{I}_1+\text{I}_2+\text{I}_3,
\end{align*}
where 
\begin{align*}
\text{I}_1&=\frac{1}{x^{\frac{1}{\beta}}\ell^{\frac{1}{\beta}}(x^{\frac{1}{\beta}})   }\sum^{[x^{\frac{1}{\beta}}\ell^{\frac{1}{\beta}}(x^{\frac{1}{\beta}})   /M]-1}_{j=1}  (K_{\infty}(a_jx)-K_{\infty}(0)),\\
\text{I}_2&=\frac{1}{x^{\frac{1}{\beta}}\ell^{\frac{1}{\beta}}(x^{\frac{1}{\beta}})   }\sum^{[Mx^{\frac{1}{\beta}}\ell^{\frac{1}{\beta}}(x^{\frac{1}{\beta}})   ]}_{j=[x^{\frac{1}{\beta}}\ell^{\frac{1}{\beta}}(x^{\frac{1}{\beta}})   /M]}  (K_{\infty}(a_jx)-K_{\infty}(0))
\end{align*}
and
\begin{align*}
\text{I}_3&=\frac{1}{x^{\frac{1}{\beta}}\ell^{\frac{1}{\beta}}(x^{\frac{1}{\beta}})   }\sum^{\infty}_{j=[Mx^{\frac{1}{\beta}}\ell^{\frac{1}{\beta}}(x^{\frac{1}{\beta}}) ]+1}  (K_{\infty}(a_jx)-K_{\infty}(0)).
\end{align*}
Recall that $K_{\infty}(x)=\mathbb{E} K(X_1+x)$ for each $x\in\R$.  By assumptions  ({\bf A1}) and ({\bf A2}), $K_{\infty}$ is bounded. So $|\text{I}_1|$ is less than a constant multiple of $\frac{1}{M}$.  

Note that $K_{\infty}$ is Lipschitz continuous and bounded, $a_j\sim j^{-\beta}\ell(j)$ and $\lim\limits_{x\to\infty} \frac{\ell(x\ell^{\frac{1}{\beta}}(x))}{\ell(x)}=1$. Then, by dominated convergence theorem,
\begin{align*}
\lim\limits_{x\to\infty}\text{I}_2
&=\lim\limits_{x\to\infty}\frac{1}{x^{\frac{1}{\beta}}\ell^{\frac{1}{\beta}}(x^{\frac{1}{\beta}})   }\sum^{[M x^{\frac{1}{\beta}}\ell^{\frac{1}{\beta}}(x^{\frac{1}{\beta}})   ]}_{j=[x^{\frac{1}{\beta}}\ell^{\frac{1}{\beta}}(x^{\frac{1}{\beta}})   /M]}  (K_{\infty}(j^{-\beta}\ell(j)x)-K_{\infty}(0))\\
&=\lim\limits_{x\to\infty}\frac{1}{x^{\frac{1}{\beta}}\ell^{\frac{1}{\beta}}(x^{\frac{1}{\beta}})   }\sum^{[M x^{\frac{1}{\beta}}\ell^{\frac{1}{\beta}}(x^{\frac{1}{\beta}})   ]}_{j=[x^{\frac{1}{\beta}}\ell^{\frac{1}{\beta}}(x^{\frac{1}{\beta}})   /M]}  (K_{\infty}(j^{-\beta}\ell(x^{\frac{1}{\beta}})x)-K_{\infty}(0))\\
&=\int^{M}_{\frac{1}{M}}\big(K_{\infty}(t^{-\beta})-K_{\infty}(0)\big)dt,
\end{align*}
where in the last equality follows from the definition of the integral.  

As for the term $\text{I}_3$, by Lipschitz continuity of $K_{\infty}$ and Theorem 2.1(b) in \cite{R},
\begin{align} \label{I3}
\limsup_{x\to\infty}|\text{I}_3|
%&=\limsup_{x\to\infty} \frac{1}{2\pi}\frac{1}{x^{\frac{1}{\beta}}\ell^{\frac{1}{\beta}}(x^{\frac{1}{\beta}})} \Bigg|\int_{\mathbb{R}} \widehat{K}(u)\phi(-u) \sum^{\infty}_{j=[Mx^{\frac{1}{\beta}}\ell^{\frac{1}{\beta}}(x^{\frac{1}{\beta}})]+1}  (e^{-\iota  u a_jx}-1)du \Bigg| \nonumber\\
&\leq c_2\limsup_{x\to\infty} \frac{1}{x^{\frac{1}{\beta}}\ell^{\frac{1}{\beta}}(x^{\frac{1}{\beta}})   } \sum^{\infty}_{j=[M x^{\frac{1}{\beta}}\ell^{\frac{1}{\beta}}(x^{\frac{1}{\beta}})]+1}  |a_j| x \nonumber\\
&\leq c_3\, \limsup_{x\to\infty} x^{-\frac{1}{\beta}}\ell^{-\frac{1}{\beta}}(x^{\frac{1}{\beta}})    (M x^{\frac{1}{\beta}}\ell^{\frac{1}{\beta}}(x^{\frac{1}{\beta}})   )^{1-\beta}\ell(Mx^{\frac{1}{\beta}}\ell^{\frac{1}{\beta}}(x^{\frac{1}{\beta}})) x \nonumber \\
&=c_3\, M^{1-\beta},
\end{align} 
where in the last equality we use the definition of slowly varying function and $\lim\limits_{x\to\infty} \frac{\ell(x\ell^{\frac{1}{\beta}}(x))}{\ell(x)}=1$.

Recall that $\beta>1$. Therefore, according to the above estimates for $\text{I}_1$, $\text{I}_2$, $\text{I}_3$ and $\text{II}$,  letting $M\to\infty$ gives 
\begin{align*}
\lim\limits_{x\to\infty} \frac{1}{x^{\frac{1}{\beta}}\ell^{\frac{1}{\beta}}(x^{\frac{1}{\beta}})}\eta_K(x)=\int^{\infty}_0 \big(K_{\infty}(t^{-\beta})-K_{\infty}(0)\big)dt.
\end{align*}
Similarly,
\[
\lim\limits_{x\to -\infty} \frac{1}{|x|^{\frac{1}{\beta}}\ell^{\frac{1}{\beta}}(|x|^{\frac{1}{\beta}})} \eta_K(x)=\int^{\infty}_0 \big(K_{\infty}(-t^{-\beta})-K_{\infty}(0)\big) dt.
\]

\noindent
{\bf Step 2} We next show that $\eta_K(\varepsilon_1)$ is in the domain of attraction of an $\alpha\beta$-stable law. That is,
\begin{align} \label{attraction}
\lim\limits_{x\to\infty} \frac{(\ell^{\leftarrow}_{\beta}(x))^{\alpha}}{h(\ell^{\leftarrow}_{\beta}(x))}\P\big(\eta_K(\varepsilon_1)>x\big)=\gamma_2\quad\text{and}\quad \lim\limits_{x\to\infty} \frac{(\ell^{\leftarrow}_{\beta}(x))^{\alpha}}{h(\ell^{\leftarrow}_{\beta}(x))}\P\big(\eta_K(\varepsilon_1)<-x\big)=\gamma_1,
\end{align}
where $\ell^{\leftarrow}_{\beta}(x)=\inf\{s>0: \ell_{\beta}(s)\geq x\}$ with $\ell_{\beta}(x)=x^{\frac{1}{\beta}}\ell^{\frac{1}{\beta}}(x^{\frac{1}{\beta}})$ defined in (\ref{lbx}), 
\[
\gamma_2=\sigma_2 (C^+_K)^{\alpha\beta} 1_{\{C^+_K>0\}}+\sigma_1 (C^-_K)^{\alpha\beta} 1_{\{C^-_K>0\}}\;\; \text{and}\;\; \gamma_1=\sigma_2 |C^+_K|^{\alpha\beta} 1_{\{C^+_K<0\}}+\sigma_1 |C^-_K|^{\alpha\beta} 1_{\{C^-_K<0\}}.
\] 
Proposition 2.6 in \cite{R} implies that $\frac{(\ell^{\leftarrow}_{\beta}(x))^{\alpha}}{h(\ell^{\leftarrow}_{\beta}(x))}$ is a regularly varying function at $\infty$ with index $\alpha\beta$. 

By assumption ({\bf A3}), there exists $A>0$ such that the regularly varying function $\ell_{\beta}(x)$ is continuous and strictly increasing on the interval $(A,\infty)$. So, for any $\delta\in (0,1)$, by (\ref{etak}), we could easily get 
\begin{align*}
&\limsup\limits_{x\to\infty} \frac{(\ell^{\leftarrow}_{\beta}(x))^{\alpha}}{h(\ell^{\leftarrow}_{\beta}(x))}\P\big(\eta_K(\varepsilon_1)>x\big)\\
&\qquad\leq 1_{\{C^+_K>0\}}\limsup\limits_{x\to\infty} \frac{(\ell^{\leftarrow}_{\beta}(x))^{\alpha}}{h(\ell^{\leftarrow}_{\beta}(x))}\P\Big(\ell_{\beta}(\varepsilon_1)>\frac{\ell_{\beta}(\varepsilon_1)}{\eta_K(\varepsilon_1)}x, \varepsilon_1>0\Big)\\
&\qquad\qquad\qquad+1_{\{C^-_K>0\}}\limsup\limits_{x\to\infty} \frac{(\ell^{\leftarrow}_{\beta}(x))^{\alpha}}{h(\ell^{\leftarrow}_{\beta}(x))}\P\Big(\ell_{\beta}(|\varepsilon_1|)>\frac{\ell_{\beta}(|\varepsilon_1|)}{\eta_K(\varepsilon_1)}x, \varepsilon_1<0\Big)\\
&\qquad\leq 1_{\{C^+_K>0\}}\limsup\limits_{x\to\infty} \frac{(\ell^{\leftarrow}_{\beta}(x))^{\alpha}}{h(\ell^{\leftarrow}_{\beta}(x))}\P\Big(\ell_{\beta}(\varepsilon_1)>\frac{1-\delta}{C^+_K}x, \varepsilon_1>0\Big)\\
&\qquad\qquad\qquad+1_{\{C^-_K>0\}}\limsup\limits_{x\to\infty} \frac{(\ell^{\leftarrow}_{\beta}(x))^{\alpha}}{h(\ell^{\leftarrow}_{\beta}(x))}\P\Big(\ell_{\beta}(|\varepsilon_1|)>\frac{1-\delta}{C^-_K} x, \varepsilon_1<0\Big)\\
&\qquad\leq 1_{\{C^+_K>0\}}\limsup\limits_{x\to\infty} \frac{(\ell^{\leftarrow}_{\beta}(x))^{\alpha}}{h(\ell^{\leftarrow}_{\beta}(x))}\P\Big(\varepsilon_1>\ell^{\leftarrow}_{\beta}(\frac{1-\delta}{C^+_K}x)\Big)\\
&\qquad\qquad\qquad+1_{\{C^-_K>0\}}\limsup\limits_{x\to\infty} \frac{(\ell^{\leftarrow}_{\beta}(x))^{\alpha}}{h(\ell^{\leftarrow}_{\beta}(x))}\P\Big(\varepsilon_1<-\ell^{\leftarrow}_{\beta}(\frac{1-\delta}{C^-_K} x)\Big).
\end{align*}
Now, by (\ref{h}) and Proposition 2.6 in \cite{R},
\begin{align*}
\limsup\limits_{x\to\infty} \frac{(\ell^{\leftarrow}_{\beta}(x))^{\alpha}}{h(\ell^{\leftarrow}_{\beta}(x))}\P\big(\eta_K(\varepsilon_1)>x\big)
&\leq (\sigma_2+\delta)\, \Big(\frac{C^+_K}{1-\delta}\Big)^{\alpha\beta} 1_{\{C^+_K>0\}}+(\sigma_1+\delta)\, \Big(\frac{C^-_K}{1-\delta}\Big)^{\alpha\beta} 1_{\{C^-_K>0\}}.
\end{align*}
Letting $\delta\downarrow 0$ gives 
\begin{align*}
&\limsup\limits_{x\to\infty} \frac{(\ell^{\leftarrow}_{\beta}(x))^{\alpha}}{h(\ell^{\leftarrow}_{\beta}(x))}\P\big(\eta_K(\varepsilon_1)>x\big)\leq \sigma_2 (C^+_K)^{\alpha\beta} 1_{\{C^+_K>0\}}+\sigma_1 (C^-_K)^{\alpha\beta} 1_{\{C^-_K>0\}}.
\end{align*}
Similarly, 
\begin{align*}
&\liminf\limits_{x\to\infty} \frac{(\ell^{\leftarrow}_{\beta}(x))^{\alpha}}{h(\ell^{\leftarrow}_{\beta}(x))}\P\big(\eta_K(\varepsilon_1)>x\big)\geq (\sigma_2-\delta) \Big(\frac{C^+_K}{1+\delta}\Big)^{\alpha\beta} 1_{\{C^+_K>0\}}+(\sigma_1-\delta) \Big(\frac{C^-_K}{1+\delta}\Big)^{\alpha\beta} 1_{\{C^-_K>0\}}.
\end{align*}
Letting $\delta\downarrow 0$ gives 
\begin{align*}
&\liminf\limits_{x\to\infty} \frac{(\ell^{\leftarrow}_{\beta}(x))^{\alpha}}{h(\ell^{\leftarrow}_{\beta}(x))}\P\big(\eta_K(\varepsilon_1)>x\big)\geq \sigma_2 (C^+_K)^{\alpha\beta} 1_{\{C^+_K>0\}}+\sigma_1 (C^-_K)^{\alpha\beta} 1_{\{C^-_K>0\}}.
\end{align*}
Therefore, we obtain the first limit in (\ref{attraction}). The second one can be obtained similarly.

\noindent
{\bf Step 3}  Let $a_N=\inf\{x>0: \mathbb{P}(|\eta_K(\varepsilon_1)|>x)\leq \frac{1}{N}\}$. Note that
\begin{align*}
\E[\eta_K(\varepsilon_1) 1_{\{|\eta_K(\varepsilon_1)|> a_N\}}]
&=\int^{\infty}_{a_N}\P(\eta_K(\varepsilon_1))>x)dx-\int^{-a_N}_{-\infty}\P(\eta_K(\varepsilon_1))<-x)dx.
\end{align*}
Recall that $\alpha\beta>1$. Then, by Theorem 2.1(a) in \cite{R}, (\ref{attraction}) and  (3.7.6)-(3.7.7) in \cite{D},
\begin{align*}
\lim_{N\to\infty} \frac{N\E[\eta_K(\varepsilon_1) 1_{\{|\eta_K(\varepsilon_1)|> a_N\}}]}{a_N}
&=\frac{1}{\alpha\beta-1}   \lim_{N\to\infty}  N  \Big( \P(\eta_K(\varepsilon_1))>a_N)-\P(\eta_K(\varepsilon_1))<-a_N)\Big)  \\
&=\frac{\gamma_2-\gamma_1}{(\alpha\beta-1)(\gamma_2+\gamma_1)}.
\end{align*}
Note that $a_N$ can be rewritten as $a_N=\inf\{x>0: \frac{1}{\mathbb{P}(|\eta_K(\varepsilon_1)|>x)}\geq N\}$. Then, by (\ref{attraction}) and Proposition 2.6 in \cite{R},
\begin{align*}
\lim_{N\to\infty} \frac{a_N}{\left(\frac{(\ell^{\leftarrow}_{\beta})^{\alpha}}{h\circ (\ell^{\leftarrow}_{\beta})}\right)^{\leftarrow}(N)}=(\gamma_2+\gamma_1)^{\frac{1}{\alpha\beta}}.
\end{align*}
Now,  by Theorem 3.7.2  and (3.7.10) in \cite{D}, we get the desired result.
\end{proof}

\medskip
\noindent
{\bf Proof of Theorem \ref{thm2}}:  Using similar arguments as in the proof of Theorem 9.32 in \cite{B},  (\ref{Z}) can be rewritten as 
\begin{align*}
\mathbb{E} e^{\iota u \mathbf{Z}^{\alpha\beta}}&=\exp\Bigg(\iota u\left[\frac{\gamma_2-\gamma_1}{\gamma_2+\gamma_1}\alpha\beta\left(\int^{\infty}_1\frac{x^{-\alpha\beta}}{x^2+1}dx-\int^1_0\frac{x^{2-\alpha\beta}}{x^2+1}dx+\frac{1}{\alpha\beta}\int^{\infty}_0\frac{x^{2-\alpha\beta}(x^{2}+3)}{(x^2+1)^2} dx \right)\right]\\
&\qquad\qquad\qquad\qquad-\left(\int^{\infty}_0  \frac{\sin x}{x^{\alpha\beta}} dx\right)|u|^{\alpha\beta}\left[1-\iota \frac{\gamma_2-\gamma_1}{\gamma_2+\gamma_1} \sgn(u)\tan(\frac{\pi\alpha\beta}{2})\right]\Bigg).
\end{align*}

Recall that $\mathcal{T}_{N}$ is a finite sum of i.i.d. random variables in the domain of attraction of an $\alpha\beta$-stable law.  So the process $\{\mathcal{T}_{[Nt]}:\, t\geq 0\}$ has independent increments. Using characteristic functions and Proposition \ref{prop52}, we could easily show that  
\begin{align} \label{taufdd}
\Bigg\{\frac{1}{\Big(\frac{(\ell^{\leftarrow}_{\beta})^{\alpha}}{h\circ (\ell^{\leftarrow}_{\beta})}\Big)^{\leftarrow}(N)} \mathcal{T}_{[Nt]}: \; t\geq 0 \Bigg\} \overset{f.d.d.}{\longrightarrow}  \left\{(\gamma_2+\gamma_1)^{\frac{1}{\alpha\beta}}\big(\overline{c}\, t^{\frac{1}{\alpha\beta}}+\mathcal{Z}^{\alpha\beta}_t \big): \; t\geq 0\right\}
\end{align}
as $N$ tends to infinity, where ``\,$\overset{\, f.d.d.\, }{\longrightarrow}$\,'' denotes the convergence of finite-dimensional distributions.

For any $t>0$, by Proposition \ref{prop} and Lemma \ref{lm51}, we can easily obtain that 
\[
\frac{1}{\left(\frac{(\ell^{\leftarrow}_{\beta})^{\alpha}}{h\circ (\ell^{\leftarrow}_{\beta})}\right)^{\leftarrow}(N)} \big(S_{[Nt]}-\mathcal{T}_{[Nt]}\big)
\] 
converges in probability to $0$ as $N$ tends to infinity. Therefore, the desired convergence of finite-dimensional distributions follows easily from (\ref{taufdd}). This completes the proof.

\subsection{Proof of Theorem \ref{thm3}}

In this subsection, we give the proof of Theorem \ref{thm3}. Recall that 
\begin{align*}
T_{N}=\sum^{N}_{n=1} \sum^{\infty}_{j=1} \big(K_{\infty}(a_j \varepsilon_{n-j})-\E K_{\infty}(a_j \varepsilon_{n-j})\big)\;\; \text{and}\;\;
\mathcal{T}_{N}=\sum^{N}_{n=1} \sum^{\infty}_{j=1} \big(K_{\infty}(a_j \varepsilon_{n})-\E K_{\infty}(a_j \varepsilon_{n}) \big).
\end{align*}
The convergence of the infinite series in $\mathcal{T}_{N}$ under the setup of Theorem \ref{thm3} follows easily from the inequality (\ref{tncon2}) below.

We first give the estimate for $\E |T_{N}-\mathcal{T}_{N}|$.

\begin{lemma} \label{lm53} Suppose that assumptions ({\bf A1})-({\bf A2}) hold, $\alpha\in (1,2)$, $\beta\in(\frac{1}{\alpha},1]$ and $\int_{\R} K(x)df(x)=0$. Then, for any $\alpha'\in (1,\alpha)$ and $\beta'\in (0,\beta)$ with   $\alpha'\beta'>1$, there is a positive constant $c_{\alpha', \beta', 5}>0$ such that
\[
\E |T_{N}-\mathcal{T}_{N}| \leq c_{\alpha',\beta', 5}\, N^{2-\alpha'\beta'}.
\]
\end{lemma}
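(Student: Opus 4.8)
The plan is to mimic the structure of Lemma \ref{lm51}, but now exploiting the extra cancellation coming from the hypothesis $\int_{\R}K(x)\,df(x)=0$, i.e. $\frac{1}{2\pi}\int_{\R}\widehat K(u)\phi(-u)(-\iota u)\,du=0$. As in the proof of Lemma \ref{lm51}, I would write $\mathcal T_N-T_N=R_{N,1}-R_{N,2}$ with
\[
R_{N,1}=\sum_{n=1}^N\sum_{j=N-n+1}^\infty K^\infty_{j,n},\qquad R_{N,2}=\sum_{n=-\infty}^0\sum_{j=1-n}^{N-n}K^\infty_{j,n},
\]
where $K^\infty_{j,n}=K_\infty(a_j\varepsilon_n)-\E K_\infty(a_j\varepsilon_n)=\frac{1}{2\pi}\int_\R\widehat K(u)\phi(-u)(e^{-\iota u a_j\varepsilon_n}-\phi_\varepsilon(-a_ju))\,du$. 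The point of the vanishing-integral hypothesis is that each $K^\infty_{j,n}$ can now be replaced, up to a term of higher order in $a_j$, by its linearization: since $\int\widehat K(u)\phi(-u)(-\iota u)\,du=0$, one has
\[
K^\infty_{j,n}=\frac{1}{2\pi}\int_\R\widehat K(u)\phi(-u)\big(e^{-\iota u a_j\varepsilon_n}-\phi_\varepsilon(-a_ju)+\iota u a_j\varepsilon_n\big)\,du,
\]
and by Lemma \ref{lemb} together with $|e^{\iota x}-1-\iota x|\le c_\delta|x|^\delta$ this is bounded by $c\,|a_j|^{\delta}(|\varepsilon_n|^{\delta}+1)$ for any $\delta\in(1,\alpha)$ with $\delta\beta>1$. (This is exactly the trick used in Lemma \ref{lemrd}, but here applied term-by-term to the double series rather than to the full sum $T_N-U_N$.) Writing $\delta=\alpha'$ with $\alpha'\in(1,\alpha)$, and noting $\E|\varepsilon_1|^{\alpha'}<\infty$ by Theorem 2.6.4 in \cite{IL} provided $\alpha'<\alpha$, we get the pointwise bound $\E|K^\infty_{j,n}|\le c\,|a_j|^{\alpha'}$ with the \emph{improved} exponent $\alpha'$ rather than $\min(\alpha',1)=1$.

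Next I would estimate $\E|R_{N,1}|$ and $\E|R_{N,2}|$ directly by the triangle inequality (no von Bahr--Esseen needed here, since we only want an $L^1$ bound): using $a_j\sim j^{-\beta}\ell(j)$ and Proposition 1(i) in \cite{A}, for any $\beta'\in(0,\beta)$ with $\alpha'\beta'>1$,
\[
\E|R_{N,1}|\le\sum_{n=1}^N\sum_{j=N-n+1}^\infty\E|K^\infty_{j,n}|\le c\sum_{n=1}^N\sum_{j=N-n+1}^\infty j^{-\alpha'\beta'}\le c\sum_{n=1}^{N-1}(N-n)^{1-\alpha'\beta'}\le c\,N^{2-\alpha'\beta'},
\]
using $1<\alpha'\beta'$ (so the inner sum converges) and then comparing the outer sum with $\int_0^N x^{1-\alpha'\beta'}\,dx$; since $\alpha'\beta'<\alpha\beta<2$ the exponent $2-\alpha'\beta'$ is positive, consistent with the claim. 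The estimate for $\E|R_{N,2}|$ is handled the same way, splitting $\sum_{n=-\infty}^0$ into $\sum_{n=0}^N$ and $\sum_{n=N+1}^\infty$ exactly as in Lemma \ref{lm51}: the first block gives $\sum_{n=0}^N\sum_{j=1+n}^{N+n}j^{-\alpha'\beta'}\le c(1+\sum_{n=1}^N n^{1-\alpha'\beta'})\le cN^{2-\alpha'\beta'}$, and the tail block $\sum_{n=N+1}^\infty\sum_{j=1+n}^{N+n}j^{-\alpha'\beta'}\le c\sum_{n=N+1}^\infty(n^{1-\alpha'\beta'}-(N+n)^{1-\alpha'\beta'})$ is bounded, via the inequality $0<x^{1-\alpha'\beta'}-(1+x)^{1-\alpha'\beta'}\le x^{-\alpha'\beta'}$ and a change of variables $x\mapsto x/N$, by $c\,N^{2-\alpha'\beta'}\int_1^\infty(x^{1-\alpha'\beta'}-(1+x)^{1-\alpha'\beta'})\,dx\le c\,N^{2-\alpha'\beta'}$. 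Combining the two bounds gives $\E|T_N-\mathcal T_N|\le c\,N^{2-\alpha'\beta'}$.

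The one genuinely delicate point — and the place I expect to have to be careful — is the $L^1$ convergence of the infinite series defining $\mathcal T_N$ (and of the inner $j$-series in $R_{N,1}$, $R_{N,2}$) in the regime $\beta\le 1$: unlike in Lemma \ref{lm51} where $\beta>1$ makes $\sum_j|a_j|^{\alpha'}$ converge as soon as $\alpha'\beta'>1$, here we need $\alpha'\in(1,\alpha)$ strictly bigger than $1$ precisely so that $\alpha'\beta>1$ can still hold when $\beta\le 1$ (this forces $\alpha>1$, which is assumed). This is why the hypothesis $\alpha\in(1,2)$ and the use of the \emph{linearized} bound $|K^\infty_{j,n}|\le c|a_j|^{\alpha'}(|\varepsilon_n|^{\alpha'}+1)$ rather than the crude bound $|K^\infty_{j,n}|\le c|a_j|(|\varepsilon_n|+1)$ from Lemma \ref{lm51} are both essential: the crude bound would only give convergence of $\sum_j|a_j|=\sum_j j^{-\beta}\ell(j)$, which diverges for $\beta\le1$. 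So the role of $\int_\R K(x)\,df(x)=0$ is exactly to upgrade the exponent from $1$ to $\alpha'$, and I would state the inequality (\ref{tncon2}) alluded to in the text as precisely $|K^\infty_{j,n}|\le c(|a_j|^{\alpha'}|\varepsilon_n|^{\alpha'}+|a_j|^{\alpha'})$, from which both the a.s./$L^1$ convergence of $\mathcal T_N$ and the quantitative bound follow.
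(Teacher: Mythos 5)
Your proposal is correct and follows essentially the same route as the paper: the same decomposition $\mathcal{T}_N-T_N=R_{N,1}-R_{N,2}$, the same use of $\int_{\R}\widehat{K}(u)\phi(-u)\,u\,du=0$ to insert the term $\iota u a_j\varepsilon_n$ and obtain $|K^{\infty}_{j,n}|\leq c\,|a_j|^{\alpha'}(|\varepsilon_n|^{\alpha'}+1)$ (exactly the paper's inequality (\ref{tncon2})), followed by the same $L^1$ double-sum estimate with exponent $\alpha'\beta'\in(1,2)$ taken from the first case of Lemma \ref{lm51}. Your remark that the vanishing-integral hypothesis is what upgrades the exponent from $1$ to $\alpha'$, making the series manageable when $\beta\leq 1$, also matches the paper's reasoning.
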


\begin{proof} Note that 
\begin{align*}
\mathcal{T}_{N}-T_{N}=R_{N,1}-R_{N,2}
&:=\left(\sum^N_{n=1}\sum^{\infty}_{j=N-n+1}-\sum^0_{n=-\infty}\sum^{N-n}_{j=1-n}\right)\Big(K_{\infty}(a_j\varepsilon_n)-\E K_{\infty}(a_j\varepsilon_n)\Big)
\end{align*}
and
\begin{align*}
K_{\infty}(a_j\varepsilon_n)-\E K_{\infty}(a_j\varepsilon_n)
&=\frac{1}{2\pi}\int_{\R} \widehat{K}(u) \phi(-u)(e^{-\iota u a_j \varepsilon_{n}}-\phi_{\varepsilon}(-a_j u)) du
\end{align*}

Let $K^{\infty}_{j,n}=K_{\infty}(a_j\varepsilon_n)-\E K_{\infty}(a_j\varepsilon_n)$. Then, by assumptions  ({\bf A1})-({\bf A2}) and $\int_{\R} K(x)df(x)=0$, we can obtain that 
\[
\int_{\R} \widehat{K}(u)\phi(-u) u\, du=0
\] 
and thus
\begin{align} \label{tncon2}
|K^{\infty}_{j,n}|   \nonumber
&=\frac{1}{2\pi}\Big|\int_{\R} \widehat{K}(u)\phi(-u)(e^{-\iota u a_j \varepsilon_{n}}+\iota u a_j \varepsilon_{n}-\phi_{\varepsilon}(-a_j u)) du\Big|\\  \nonumber
&\leq \int_{\R} |\widehat{K}(u)| |\phi(u)|\big(|e^{-\iota u a_j \varepsilon_{n}}-1+\iota u a_j \varepsilon_{n}|+|1-\phi_{\varepsilon}(-a_j u)|\big) du\\  \nonumber
&\leq c_{1}\, \int_{\R} |\widehat{K}(u)| |\phi(u)|\big( |u a_j \varepsilon_{n}|^{\alpha'}+|a_j u|^{\alpha'}\big) du\\ 
&\leq c_{2}\, |a_j|^{\alpha'} (|\varepsilon_n|^{\alpha'}+1).
\end{align}
Then, using similar arguments as in the proof of Lemma \ref{lm51}, we can get the desired estimate.
%By Theorem 2.6.4 in \cite{IL}, $\mathbb{E} |\varepsilon_n|^{\alpha'}<\infty$. Let $\delta=\alpha'\beta'$. Clearly, $\delta\in (1,2)$. We can obtain that
%\begin{align*}
%\E |R_{N,1}|+\E |R_{N,2}|
%&\leq c_{3}\, \sum^N_{n=1}\sum^{\infty}_{j=N-n+1} |a_j|^{\alpha'}+c_{3}\, \sum^0_{n=-\infty}\sum^{N-n}_{j=1-n} |a_j|^{\alpha'}\\
%&\leq c_{4}\, \sum^N_{n=1}\sum^{\infty}_{j=N-n+1} j^{-\delta}+c_{4}\, \sum^{N}_{n=0}\sum^{N+n}_{j=1+n} j^{-\delta}+c_{15}\sum^{\infty}_{n=N+1}\sum^{N+n}_{j=1+n} j^{-\delta}\\
%&\leq c_{5}\, (1+\sum^{N-1}_{n=1} (N-n)^{1-\delta})+c_{5}\, (1+\sum^{N}_{n=1}  n^{1-\delta})+c_{5}\, \sum^{\infty}_{n=N+1}(n^{1-\delta}-(N+n)^{1-\delta})\\
%&\leq c_{6}\, N^{2-\delta}+c_{6}\, \int^{\infty}_N (x^{1-\delta}-(N+x)^{1-\delta})\, dx\\
%&\leq c_{7}\, N^{2-\delta}+c_{7}\, N^{2-\delta}\int^{\infty}_1(x^{1-\delta}-(1+x)^{1-\delta})\, dx\\
%&\leq c_{8}\, N^{2-\delta},
%\end{align*}
%where in the last third and last inequalities we used the facts that
%\[
%f_N(x)=x^{1-\delta}-(N+x)^{1-\delta}
%\] 
%is decreasing on $[N,\infty)$ and
%\[
%0\leq f_1(x)=x^{1-\delta}-(1+x)^{1-\delta}\leq x^{-\delta}
%\]
%on $[1,\infty)$, respectively.  
\end{proof}

Next we show the asymptotic behavior of $\mathcal{T}_N$.
\begin{proposition} \label{prop53} Under the assumptions of Theorem \ref{thm3},
\[
\frac{1}{\left(\frac{(\ell^{\leftarrow}_{\beta})^{\alpha}}{h\circ \ell^{\leftarrow}_{\beta}}\right)^{\leftarrow}(N)} \mathcal{T}_N \overset{\mathcal{L}}{\longrightarrow}  (\gamma_2+\gamma_1)^{\frac{1}{\alpha\beta}}\left(\frac{\gamma_2-\gamma_1}{(\alpha\beta-1)(\gamma_2+\gamma_1)}+\mathbf{Z}^{\alpha\beta}\right)
\]
as $N$ tends to infinity, where $\mathbf{Z}^{\alpha\beta}$ is the $\alpha\beta$-stable random variable with characteristics function given in (\ref{Z}). 
\end{proposition}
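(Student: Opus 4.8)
The plan is to follow the proof of Proposition~\ref{prop52} step by step; its three-step structure carries over unchanged, and the one place that needs a genuinely new argument is Step~1. There the hypothesis $\beta>1$ entered through $\sum_j|a_j|<\infty$, which controlled the tail term $\mathrm I_3$ via the Lipschitz bound (\ref{lipschitz}). Here $\beta\le 1$, so $\sum_j|a_j|=\infty$ and (\ref{lipschitz}) is too weak; the assumption $\int_{\R}K(x)\,df(x)=0$ is what rescues the argument.

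First I would upgrade (\ref{lipschitz}) to a second-order estimate. Exactly as in the derivation of (\ref{tncon2}) in Lemma~\ref{lm53}, $\int_{\R}K(x)\,df(x)=0$ is equivalent to $\int_{\R}\widehat K(u)\phi(-u)\,u\,du=0$; subtracting this vanishing integral from the Fourier representation of $K_\infty(a_jx)-K_\infty(0)$ gives, for any $\alpha'\in(1/\beta,\alpha)$ (so $\alpha'>1$ and $\alpha'\beta>1$, which is possible since $\alpha\beta>1$ and $\beta\le 1$),
\begin{align*}
|K_\infty(a_jx)-K_\infty(0)|=\frac{1}{2\pi}\Big|\int_{\R}\widehat K(u)\phi(-u)\big(e^{-\iota ua_jx}-1+\iota ua_jx\big)\,du\Big|\le c\,(|a_jx|^{\alpha'}\wedge 1),
\end{align*}
the integral being finite because $\widehat K$ is bounded and $\phi$ decays faster than any polynomial; similarly $|K_\infty(0)-\E K_\infty(a_j\varepsilon_1)|\le c|a_j|^{\alpha'}$ by Lemma~\ref{lemb}. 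With these bounds I would rerun Step~1 of Proposition~\ref{prop52} verbatim: write $\frac{1}{x^{1/\beta}\ell^{1/\beta}(x^{1/\beta})}\eta_K(x)=\mathrm I+\mathrm{II}$ with $\mathrm I=\mathrm I_1+\mathrm I_2+\mathrm I_3$ split at the scales $x^{1/\beta}\ell^{1/\beta}(x^{1/\beta})/M$ and $Mx^{1/\beta}\ell^{1/\beta}(x^{1/\beta})$; boundedness of $K_\infty$ gives $\mathrm{II}\to 0$ and $|\mathrm I_1|\le c/M$; dominated convergence (with dominating function $\min(ct^{-\alpha'\beta},c)$ on $[1/M,M]$) gives $\mathrm I_2\to\int_{1/M}^M(K_\infty(t^{-\beta})-K_\infty(0))\,dt$; and for $\mathrm I_3$ the new bound, the Karamata estimate for tail sums of regularly varying sequences (Theorem~2.1(b) in \cite{R}), and the hypothesis $\lim_{x\to\infty}\ell(x\ell^{1/\beta}(x))/\ell(x)=1$ give $\limsup_{x\to\infty}|\mathrm I_3|\le cM^{1-\alpha'\beta}$, which tends to $0$ as $M\to\infty$ because $\alpha'\beta>1$. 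Letting $M\to\infty$ yields (\ref{etak}) with $C_K^{\pm}=\int_0^\infty(K_\infty(\pm t^{-\beta})-K_\infty(0))\,dt$, both integrals converging since the integrand is bounded near $0$ and is $O(t^{-\alpha'\beta})$ near $\infty$.

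From there the argument is identical to Steps~2 and~3 in the proof of Proposition~\ref{prop52}. Under the hypotheses of Theorem~\ref{thm3} one still has $\alpha\beta\in(1,2)$, so Step~2 shows $\eta_K(\varepsilon_1)$ lies in the domain of attraction of an $\alpha\beta$-stable law with the same constants $\gamma_2,\gamma_1$ (via (\ref{h}) and Proposition~2.6 in \cite{R}), and Step~3 — using Theorem~2.1(a) and Proposition~2.6 in \cite{R}, (3.7.6)--(3.7.7), Theorem~3.7.2 and (3.7.10) in \cite{D}, and $\E\eta_K(\varepsilon_1)=0$ — identifies the limit with the asserted random variable. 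The main obstacle is precisely the estimate for $\mathrm I_3$: without the cancellation afforded by $\int_{\R}K(x)\,df(x)=0$ the series defining $\eta_K$ has too heavy a tail when $\beta\le 1$, and the bound $|K_\infty(a_jx)-K_\infty(0)|\le c|a_jx|^{\alpha'}$ with $\alpha'\beta>1$ is exactly what makes $\eta_K(x)$ asymptotically proportional to $|x|^{1/\beta}\ell^{1/\beta}(|x|^{1/\beta})$.
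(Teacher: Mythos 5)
Your proposal is correct and follows essentially the same route as the paper: exploit the cancellation $\int_{\R}K(x)\,df(x)=0$ to replace the Lipschitz bound by a superlinear Taylor estimate on $K_\infty(a_jx)-K_\infty(0)$, redo the $\mathrm I_3$ estimate in Step~1 of Proposition~\ref{prop52}, and then repeat Steps~2 and~3 verbatim. The only cosmetic difference is that the paper bounds the remainder $e^{-\iota ua_jx}-1+\iota ua_jx$ by $c\,u^2a_j^2x^2$ and uses $2\beta>1$, while you use the exponent $\alpha'\in(\tfrac{1}{\beta},\alpha)$ and $\alpha'\beta>1$; both choices give $\limsup_{x\to\infty}|\mathrm I_3|\to 0$ as $M\to\infty$ and the same conclusion.
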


\begin{proof} By assumptions  ({\bf A1})-({\bf A2}), Plancherel formula and $\int_{\R} K(x)df(x)=0$,
\begin{align*}
\int^{\infty}_1 \left| K_{\infty}(t^{-\beta})-K_{\infty}(0)\right| dt
&=\frac{1}{2\pi}\int^{\infty}_1 \left| \int_{\R} \widehat{K}(u)\phi(-u)(e^{-\iota u t^{-\beta}}-1)du \right| dt\\
&=\frac{1}{2\pi} \int^{\infty}_1 \left|  \int_{\R} \widehat{K}(u)\phi(-u)(e^{-\iota u t^{-\beta}}-1+\iota u t^{-\beta})du \right| dt\\
&\leq c_1 \int^{\infty}_1 \int_{\R} \frac{1}{1+u^4}\frac{u^2}{t^{2\beta}}du\, dt\\
&<\infty,
\end{align*}
where we use the fact that $2\beta>\frac{2}{\alpha}>1$ in the last inequality. 

Note that the function $K_{\infty}$ is bounded and Lipschitz continuous. Hence $\int^{\infty}_0 \big(K_{\infty}(t^{-\beta})-K_{\infty}(0)\big)dt$ is finite for $\beta\in (\frac{1}{\alpha}, 1]$. Similarly, $\int^{\infty}_0 \big(K_{\infty}(-t^{-\beta})-K_{\infty}(0)\big)dt$ is finite for $\beta\in (\frac{1}{\alpha}, 1]$.  Now the desired result follows from almost the same arguments as in the proof of Proposition \ref{prop52} with (\ref{I3}) replaced by
\begin{align*}
\limsup_{x\to\infty}|\text{I}_3|
&=\limsup_{x\to\infty} \frac{1}{2\pi}\frac{1}{x^{\frac{1}{\beta}}\ell^{\frac{1}{\beta}}(x^{\frac{1}{\beta}})   } \bigg|\int_{\mathbb{R}} \widehat{K}(u) \phi(-u) \sum^{\infty}_{j=[Mx^{\frac{1}{\beta}}\ell^{\frac{1}{\beta}}(x^{\frac{1}{\beta}})]+1}  (e^{-\iota  u a_jx}-1+\iota u a_j x)du \bigg|\\
&\leq c_2 \limsup_{x\to\infty} \frac{1}{x^{\frac{1}{\beta}}\ell^{\frac{1}{\beta}}(x^{\frac{1}{\beta}})} \sum^{\infty}_{j=[M x^{\frac{1}{\beta}}\ell^{\frac{1}{\beta}}(x^{\frac{1}{\beta}})]+1}  a^2_j x^2\\
&\leq c_3\, \limsup_{x\to\infty} x^{-\frac{1}{\beta}}\ell^{-\frac{1}{\beta}}(x^{\frac{1}{\beta}})    (M x^{\frac{1}{\beta}}\ell^{\frac{1}{\beta}}(x^{\frac{1}{\beta}})   )^{1-2\beta}\ell^2(Mx^{\frac{1}{\beta}}\ell^{\frac{1}{\beta}}(x^{\frac{1}{\beta}})) x^2\\
&=c_3\, M^{1-2\beta}.
\end{align*}
This completes the proof.
\end{proof}

\medskip
\noindent
{\bf Proof of Theorem \ref{thm3}}: This follows from Proposition \ref{prop}, Lemma \ref{lm53}, Proposition \ref{prop53} and similar arguments as in the proof of Theorem \ref{thm2}.

\section*{Acknowledgements} We would like to thank the editor and an anonymous referee for their valuable comments.

\bigskip

$\begin{array}{cc}
\begin{minipage}[t]{1\textwidth}
{\bf Hui Liu}

\medskip
School of Statistics, East China Normal University, Shanghai 200262, China \\
\texttt{lhui56@163.com}
\end{minipage}
\hfill
\end{array}$

\medskip

$\begin{array}{cc}
\begin{minipage}[t]{1\textwidth}
{\bf Yudan Xiong}

\medskip
School of Statistics, East China Normal University, Shanghai 200262, China \\
\texttt{xyd\_980107@163.com}
\end{minipage}
\hfill
\end{array}$

\medskip

$\begin{array}{cc}

\begin{minipage}[t]{1\textwidth}

{\bf Fangjun Xu}

\medskip

KLATASDS-MOE, School of Statistics, East China Normal University, Shanghai, 200062, China 

\medskip
NYU-ECNU Institute of Mathematical Sciences at NYU Shanghai, Shanghai, 200062, China\\
\texttt{fjxu@finance.ecnu.edu.cn, fangjunxu@gmail.com}

\end{minipage}

\hfill

\end{array}$

\end{document}